\newtheorem{lemma}{Lemma}[section]
\newtheorem{theorem}[lemma]{Theorem}
\newtheorem{corollary}[lemma]{Corollary}
\newtheorem{proposition}[lemma]{Proposition}
\theoremstyle{definition}
\newtheorem{definition}[lemma]{Definition}
\newtheorem{algorithm}[lemma]{Algorithm}
\newtheorem{example}[lemma]{Example}
\newtheorem{remark}[lemma]{Remark}
\newcommand{\K}{\ensuremath{\Bbbk}} 
\DeclareMathOperator{\inn}{in}
\DeclareMathOperator{\lcm}{lcm}
\DeclareMathOperator{\Spec}{Spec}
\DeclareMathOperator{\val}{val}
\DeclareMathOperator{\trop}{trop}
\DeclareMathOperator{\supp}{supp}
\DeclareMathOperator{\gin}{gin}
\DeclareMathOperator{\id}{id}
\DeclareMathOperator{\lc}{lc}
\DeclareMathOperator{\lm}{lm}
\DeclareMathOperator{\spann}{span}
\newcommand{\PuiseuxC}{\ensuremath{\mathbb C \{\!\{t \} \! \}}}
\begin{document}

\title{Gr\"obner bases over fields with valuations}

\author{Andrew J. Chan}

\address{Mathematics Institute\\ University of Warwick\\
Coventry, CV4 7AL\\ United Kingdom}
\email{andrew.john.chan@gmail.com}

\author{Diane Maclagan}
\address{Mathematics Institute\\ University of Warwick\\
Coventry, CV4 7AL\\ United Kingdom}
\email{D.Maclagan@warwick.ac.uk}

\subjclass[2010]{Primary 13P10; 14T05}

\begin{abstract}
Let $K$ be a field with a valuation and let $S$ be the polynomial ring
$S:= K[x_1, \dots, x_n]$.  We discuss the extension of
Gr\"obner theory to ideals in $S$, taking the valuations of
coefficients into account, and describe the Buchberger algorithm in
this context.  In addition we discuss some implementation and
complexity issues.  The main motivation comes from tropical geometry,
as tropical varieties can be defined using these Gr\"obner bases, but we
also give examples showing that the resulting Gr\"obner bases can be
substantially smaller than traditional Gr\"obner bases.  In the case
$K =\mathbb Q$ with the $p$-adic valuation the algorithms have been
implemented in a Macaulay 2 package.
\end{abstract}

\maketitle

\section{Introduction}

Most work in computational algebraic geometry makes fundamental
use of the theory of Gr\"obner bases.  In this paper we consider
algorithms for a variant of Gr\"obner bases for ideals in a
polynomial ring with coefficients in a valued field.

A major application for these results is in the rapidly growing field
of tropical geometry.  Let $X$ be the subvariety of $\mathbb P^{n-1}$
defined by a homogeneous ideal $I$ in $S=K[x_1,\dots,x_n]$, where $K$
is a field with a valuation $\val \colon K^* \rightarrow \mathbb R$.
Let $X^0$ be the intersection of $X$ with the torus $T \cong
(K^*)^{n-1}$ of $\mathbb P^{n-1}$.  The tropicalization of $X^0$ is
the set of $w \in \mathbb R^n/\mathbb R(1,\dots,1)$ for which the
modified initial ideal $\inn_{w}(I)$ does not contain a monomial.
This has the structure of a polyhedral complex, and many invariants of
$X$ can be recovered from the tropical variety.

Prior computational work in tropical geometry has focused on 
 ideals with coefficients in either $\mathbb Q$ with the trivial
 valuation or $\overline{\mathbb Q(t)}$, as those cases can be treated
 using standard Gr\"obner techniques.  See the software {\tt
   gfan}~\cite{gfan} for details.  Standard Gr\"obner techniques do
 not suffice for the case of $\mathbb Q$ with the $p$-adic valuation
 $\val_p$, which is of increasing interest thanks to the connections
 between tropical geometry and Berkovich spaces; see, for example,
 \cite{BPR}.

Section~\ref{s:GrobnerTheory} explains how the standard Gr\"obner
algorithms need to be modified to handle general valued fields $K$,
such as $(\mathbb Q,\val_p)$.  The main issue is that the standard
normal form algorithm need not terminate.  The solution is to replace
it by a modification of Mora's tangent cone algorithm; the main
contribution of this part of the paper is the suggestion of an
appropriate \'ecart function.
Unlike the standard basis case, we get a strong normal form; see
Remark~\ref{r:normalform}.  In Sections~\ref{s:complexity} and
\ref{s:implementation} we discuss complexity and implementation
issues.  Degree bounds are as for usual Gr\"obner bases
(Theorem~\ref{t:degreebound}).  While the valuations of coefficients
in a reduced Gr\"obner basis cannot be bounded by the valuations of
the original generators (Example~\ref{e:unbounded}), for
coefficients in $(\mathbb Q,\val_p)$ we can bound the valuations of
coefficients in a reduced Gr\"obner basis using the valuations and
absolute values of coefficients of the generators; see
Proposition~\ref{proposition.complexity.valbound}.

A theoretical consequence of these results is a computational proof
that the tropical variety of an ideal only depends on the field
defined by the coefficients of the generators; see
Corollary~\ref{c:tropicaldoesnotdepend}.  We expect these algorithms
to also have applications outside tropical geometry.  In particular,
they can lead to smaller Gr\"obner bases.  In
Section~\ref{s:cardinality} we give a family of ideals in $\mathbb
Q[x_1,x_2,x_3]$ for which the size of the $p$-adic Gr\"obner basis is
constant but the smallest size of a traditional Gr\"obner basis grows
unboundedly.

  The algorithms have been implemented in a package {\tt
    GrobnerValuations}~\cite{GrobnerValuations} for the computational
  algebraic geometry system {\tt Macaulay2}~\cite{M2}, which is
  available from the authors' webpages.  A preliminary implementation
  is also available in {\tt gfan}~\cite{gfan}.  For a different
  approach to this problem, see \cite{MarkwigRen}.

\noindent {\bf Acknowledgments.}  We thank Spencer Backman and Anders Jensen for
comments on an earlier draft of this paper.  Maclagan was partially
supported by EPSRC grant EP/I008071/1.

\section{Gr\"obner Theory}
\label{s:GrobnerTheory}

In this section we generalize the standard Buchberger algorithm for
Gr\"obner bases so that it takes the valuations of coefficients
into account.  The key algorithm is Algorithm~\ref{a:normalForm},
which computes the normal form of a polynomial.

Let $K$ be a field with a valuation $\val \colon K^* \rightarrow
\mathbb R$.  We denote by $R := \{ a \in K : \val(a) \geq 0 \} \cup
\{0\}$ the valuation ring of $K$, by $\mathfrak{m}:= \{ a \in K :
\val(a) >0 \} \cup \{0 \}$ the maximal ideal of the local ring $R$,
and by $\K := R/\mathfrak{m}$ the residue field.  For $a \in R$ we
denote by $\overline{a}$ the image of $a$ in $\K$.  The image of the
valuation map is denoted by $\Gamma$, and is an additive subgroup of
$\mathbb R$.  We assume that there exists a group homomorphism $\phi :
\Gamma \rightarrow K^*$ with $\val(\phi(w))= w$.  This always exists
if $K$ is algebraically closed (see \cite[Lemma
  2.1.15]{TropicalBook}).  For example, the field $K=\mathbb Q(t)$ has
valuation $\val(f/g) = a$ when the Taylor series for $f/g$ is $\alpha
t^a + $ higher order terms.  In this case we can set $\phi(w)=t^w$.
We use the notation $w \mapsto t^w$ for the homomorphism $\phi$ for an
arbitrary field.  We make frequent use of the $p$-adic valuation
$\val_p$ on $\mathbb Q$.  If $a =p^mb/c$, where $p$ does not divide
$b$ or $c$ then $\val_p(a)=m$.  In this case $\Gamma = \mathbb Z$, and
we can take $\phi$ to be $\phi(w)=p^w$.  Another standard, though less
computationally effective, choice of field is the Puiseux series
$\PuiseuxC$ with valuation the lowest exponent occurring.

Let $S$ be the polynomial ring $K[x_1, \dots, x_n]$, and fix a weight
vector $w \in \mathbb R^n$.  For $f = \sum_{u \in \mathbb N^n} c_u x^u
\in S$, let $W:=\trop(f)(w)=\min ( \val(c_u)+ w \cdot u : c_u \neq
0)$.  The initial term of $f$ with respect to $w$ is $$\inn_{w}(f) =
\sum_{\val(c_u)+w \cdot u = W} \overline{t^{-\val(c_u)}c_u} x^u \in
\K[x_1,\dots,x_n].$$ The initial ideal of a homogeneous ideal $I
\subset S$ with respect to $w \in \mathbb R^n$ is $$\inn_{w}(I) = \langle
\inn_{w}(f) : f \in I \rangle \subseteq \K[x_1,\dots,x_n].$$ Note that
$\inn_{w}(I)$ is an ideal in $\K[x_1,\dots, x_n]$.  A finite set
$\mathcal G = \{g_1,\dots, g_s \} \subset I$ is called a {\em
  Gr\"obner basis} for $I$ with respect to $w$ if $\inn_{w}(I) =
\langle \inn_{w}(g_1) , \dots , \inn_{w}(g_s) \rangle$.  The
requirement that the ideal $I$ be homogeneous is not necessary to
define an initial ideal, but is for a Gr\"obner basis to have expected
properties; see Remark~\ref{r:homogeneity}.

This modification of the original definition of a Gr\"obner basis
comes from tropical geometry; see \cite{SpeyerThesis}.  When the
valuation on $K$ is trivial this initial ideal is the standard initial
ideal of $I$ with respect to the weight vector $-w$.  While many
properties of these Gr\"obner bases are well understood when the
valuation is nontrivial, computational issues have not yet been
addressed in the literature.

\begin{example}
Let $f= 3x^2+xy+18y^2 \in \mathbb Q[x,y]$, where $\mathbb Q$ has the
$3$-adic valuation.  For $w = (0,0)$ we have $W = 0$, and $\inn_w(f) =
xy \in \mathbb Z/3\mathbb Z[x,y]$.  For $w=(1,4)$ we have $W=3$, and
$\inn_w(f) = x^2$, and for $w=(2,0)$ we have $W=2$ and $\inn_w(f) =
xy+\overline{3^{-2}18}y^2 = xy+2y^2$.
\end{example}

A Gr\"obner basis for an ideal $I$ can be computed by a modification
of the standard Buchberger algorithm 
as we explain below.  The main difference is in the normal form algorithm
for the remainder of a polynomial on division by a set of other
polynomials.  The difficulty is that a naive implementation of the
normal form algorithm need not terminate, as the following example
shows.

\begin{example} \label{e:nontermination}
Let $K=\mathbb{Q}$ with the $2$-adic valuation.  Consider the standard
normal form algorithm, where the term to be canceled at each stage is
taken to be the term with the lowest valuation of the coefficient.  
Using this to compute the remainder of $x \in \mathbb Q[x,y,z]$ on
division by $\{x-2y,y-2z,z-2x\}$, we reduce $x$ by $x-2y$ to get $2y$.
This is then reduced by $y-2z$ to get $4z$, which in turn is reduced
by $z-2x$ to get $8x$.  This reduction continues indefinitely.
\end{example}

This problem also arises in the theory of standard bases; see for
example \cite[\S 4.3]{CLO2}.  The solution in that setting, Mora's
tangent cone algorithm, is to allow division by previous partial
quotients.  Termination is assured by a descending nonnegative integer
invariant called the \'ecart which measures the difference in degrees
between two possible initial terms of a polynomial.  A difficulty in
generalizing this function to Gr\"obner bases with valuations is that
this difference must take the valuations of the coefficients into
account, so would naturally lie in the not-necessarily-well-ordered
group $\Gamma$.  Even for the valuation $\val_p$ on $\mathbb Q$, where
$\Gamma = \mathbb Z$, the standard \'ecart function does not work
directly.

The following algorithm modifies Mora's algorithm to take into account
the valuations of the coefficients.  It uses a function $E(f,g)$,
which takes two homogeneous polynomials and returns a nonnegative
integer.  In Lemma~\ref{l:normalForm.terminates} we give one option
for this function that ensures termination.  We present the algorithm
with the function $E$ unspecified as more efficient functions $E$ may
exist.

As in all normal form algorithms this is a generalization of long
division, which works by canceling the ``leading term'' of the
polynomial $f$.  An added complication is that we do not assume that
the weight vector $w$ is generic, so the leading term $\inn_w(f)$ is
not necessarily a monomial.  For this reason we also fix an arbitrary
monomial term order $\prec$ (in the sense of usual Gr\"obner theory)
to determine which term of $\inn_w(f)$ to cancel.  If $w$ is
sufficiently generic with respect to the input polynomials $\prec$
will play no role.  For $f \in K[x_1,\dots,x_n]$, 
$\inn_{\prec}(\inn_w(f)) = \alpha x^u$ denotes the leading term, including the
coefficient.  We denote by $\lm(f)$ the monomial $x^u$
occurring in $\inn_{\prec}(\inn_w(f))$, and by $\lc(f)$ the
coefficient of $x^u$ in $f$.  Note that $\lc(f) \in K$, not
$\K$, and that $\lc(f)$ and $\lm(f)$ depend on both $w$ and $\prec$.

We also use the following partial order on polynomials, which plays
the role of comparing initial monomials in usual Gr\"obner bases.

\begin{definition} \label{d:totalorder}
Fix homogeneous polynomials $f,g \in K[x_1,\dots,x_n]$, $w \in \mathbb
R^n$, and a term order $\prec$.  Write $\lm(f)=x^u$, $\lm(g)=x^v$,
$\lc(f)=a$, and $\lc(g)=b$.  Then $f < g$ if $\val(a)+w \cdot u <
\val(b) + w \cdot v$ or $\val(a)+ w \cdot u = \val(b) + w \cdot v$ and
$x^u \succ x^v$.  In addition we set $f < 0$ for all nonzero $f$.
This is consistent with the first part of the definition if we regard
the valuation as a function $\val \colon K \rightarrow \mathbb R \cup
\{ \infty \}$.
\end{definition}

For example, if $\mathbb Q$ has the $2$-adic valuation, $w=(1,2)$ and
$\prec$ is the lexicographic term order with $x_1 \succ x_2$, then $x_1^2
< x_2^2 < x_1^5 < 2x_2^2$.  Note that if $f \geq h$ and
$g \geq h$ then $f \pm g \geq h$.

\begin{algorithm} \label{a:normalForm} 

\noindent {\bf Input: } Homogeneous polynomials $\{g_1,\dots, g_s \}$, a homogeneous polynomial $f$ in
$S=K[x_1,\dots,x_n]$, a weight vector $w\in\mathbb R^n$, and a
term order $\prec$.

\noindent {\bf Output: } Homogeneous polynomials $h_1,\dots,h_s, r \in S$ 
satisfying
$$f=\sum_{i=1}^s h_i g_i +r,$$
where 
$h_ig_i \geq f$ for $1 \leq i \leq s$,
and $r \geq f$.
Write $r=\sum b_v x^v$ with $b_v \in K$.  Then in addition $b_v \neq 0$
implies $x^v$ is not divisible by any $\lm(g_i)$.

We call $r$ a \emph{remainder}, or \emph{normal form}, of dividing $f$
by $\{g_1,\dots,g_s\}$.

\begin{enumerate}

\item {\bf Initialize: } Set $T = \{ g_1,\dots,g_s \},
  h_{10}=\dots=h_{s0}=0, q_0 =f, r_0=0$.  Set $j=0$.

\item {\bf Loop: } While $q_j \neq 0$ do:

\begin{enumerate}
\item {\bf Move to remainder: } If there is no $g \in T$ with
  $\lm(g)$ dividing $\lm(q_j)$, then set
  $r_{j+1}=r_j+\lc(q_j)\lm(q_j)$, $q_{j+1}=q_j -
  \lc(q_j)\lm(q_j)$, and $h_{i{j+1}}=h_{ij}$ for all
  $i$.  Set $T = T \cup \{q_j \}$.

\item {\bf Divide: } Otherwise:  \label{i:divide}
\begin{enumerate}
\item \label{i:Chooseg} Choose $g \in T$ such that $\lm(g)$ divides
  $\lm(q_j)$ with $E(q_j,g)$ minimal among all such choices.  
\item If
$E(q_j,g) >0$ then set $T=T \cup \{q_j\}$. 

\item Since $\lm(g)$ divides $\lm(q_j)$ there is a
  monomial $x^v$ with $\lm(x^vg)=\lm(q_j)$.  Set
  $c_v = \lc(q_j)/\lc(x^vg) \in K$.  Let
  $p=q_j-c_vx^vg$.

\item 
If $g = g_{m}$ for some $1 \leq m \leq s$, then set $q_{j+1}=p$,
$h_{m{j+1}}=h_{mj}+c_vx^v$, $h_{i{j+1}}=h_{ij}$ for
$i \neq m$, and $r_{j+1}=r_j$.

\item  \label{i:v} If $g$ was added to $T$ at some previous iteration of the
  algorithm, so $g=q_m$ for some $m<j$, then set $q_{j+1}=1/(1-c_v)p$,
 $h_{i{j+1}}=1/(1-c_v)(h_{ij}-c_vh_{im})$, and $r_{j+1}=
1/(1-c_v)(r_j-c_vr_m)$.
\end{enumerate}

\item $j=j+1$.

\end{enumerate}

\item {\bf Output: } Output $h_i=h_{ij}$ for $1 \leq i \leq s$, and $r=r_j$.

\end{enumerate}

\end{algorithm}

\begin{example}
Let $f = x^2+y^2+z^2 \in \mathbb Q[x,y,z]$ where $\mathbb Q$ has the
$2$-adic valuation, and let $g_1=y+16z$.  Fix $w = (3,2,1)$, and let
$\prec$ be the lexicographic order with $x \prec y \prec z$.  For
clarity we underline the term of a polynomial $f$ containing
$\lm(f)$.  We do not specify the function $E(f,g)$, assuming
that it is always positive.  Then the algorithm proceeds as follows.

\begin{enumerate}

\item $T=\{\underline{y}+16z\}$, $h_{10}=0$,
$q_0=x^2+y^2+\underline{z^2}$, $r_0=0$, $j=0$.

\item $T=\{\underline{y}+16z, x^2+y^2+\underline{z}^2\}$, $h_{11}=0$,
$q_1=x^2+\underline{y}^2$, $r_1=z^2$, $j=1$.

\item $T=\{\underline{y}+16z, x^2+y^2+\underline{z}^2,
x^2+\underline{y}^2 \}$, $h_{12}=y$, $q_2=\underline{x}^2-16yz$,
$r_2=z^2$, $j=2$.

\item $T = \{\underline{y}+16z, x^2+y^2+\underline{z}^2,
x^2+\underline{y}^2, \underline{x}^2-16yz \}$, $h_{13}=y$,
$q_3=-16yz$, $r_3=x^2+z^2$, $j=3$.

\item $T= \{\underline{y}+16z,x^2+y^2+\underline{z}^2,
x^2+\underline{y}^2, \underline{x}^2-16yz, -16yz \}$,
  $h_{14}=y-16z$, $q_4 = 256z^2$, $r_4=x^2+z^2$, $j=4$.

\item $T= \{\underline{y}+16z,x^2+y^2+\underline{z}^2,
x^2+\underline{y}^2, \underline{x}^2-16yz, -16yz , 256z^2 \}$.  In
this case we divide by $g=x^2+y^2+z^2=q_0$, so $c_v=256$.  Thus
$h_{15}=-1/255(y-16z)$, $q_5=1/255(256x^2+\underline{256y}^2)$,
$r_5=-1/255(x^2+z^2)$, and $j=5$.

\item $T= \{\underline{y}+16z,x^2+y^2+\underline{z}^2,
x^2+\underline{y}^2, \underline{x}^2-16yz, -16yz , -16z^2, 256/255x^2
+ \underline{255/256y}^2 \}$.  Then $g=x^2+y^2=q_1$, so $c_v=256/255$.
 Thus $h_{16}=255(1/255(y-16z))=y-16z$, $q_6=0$,
$r_6=-255(-1/255(x^2+z^2) - 256/255z^2)=x^2+257z^2$, and $j=6$.

\item Output $h_1=y-16z$ and $r=x^2+257z^2$.
\end{enumerate}

Note that $x^2+y^2+z^2 = (y-16z)(y+16z)+x^2+257z^2$ and no term of
$x^2+257z^2$ is divisible by $\lm(y+16z)=y$.

\end{example}

\begin{proof}[Proof of correctness]

We show correctness assuming termination.  

We show that the following properties hold at each stage of the algorithm:
\begin{enumerate}
\item  $f= q_j + \sum_{i=1}^s h_{ij}g_i +r_j$;   \label{enum:equality}
\item $h_{ij}g_i \geq f$; \label{enum:larger} 
\item $r_j \geq f$; \label{enum:order1}
\item No term of $r_j$ is divisible by any $\lm(g_i)$; \label{enum:remainder}
\item  $q_j \geq f$; \label{enum:order2}
\item If $q_{j+1} \neq 0$ then $q_{j+1} >
  q_{j}$.  \label{enum:valuationgrowing}
\end{enumerate}
 These properties all hold at the initialization step by construction.
 We now show they continue to hold after each of the three types of
 iteration step.  We also show that in step \ref{i:v} of the algorithm
 we have $1-c_v \neq 0$.  In all cases, write
 $\lc(q_j)\lm(q_j) = c_jx^{\alpha_j}$.  There are three
 possibilities for the division step, which we consider separately.

{\bf Case 1:} {\em Move to remainder}.  Suppose there is no $g \in T$
with $\lm(g)$ dividing $\lm(q_j)$.  Then the only values that change
are $q_j$ and $r_j$, but we have $q_j+r_j=q_{j+1}+r_{j+1}$ by
construction, so the equality \ref{enum:equality} holds.
Condition~\ref{enum:larger} holds at stage $j+1$ since it held at
stage $j$.  Since properties~\ref{enum:order1} and~\ref{enum:order2}
hold for $j$, property~\ref{enum:order1} holds for $j+1$.  The term
that is added to $r_{j+1}$ is not divisible by any $\lm(g_i)$, so
property~\ref{enum:remainder} still holds.  The term
$c_{j+1}x^{\alpha_{j+1}}$ is a nonleading term of $q_j$, so
property~\ref{enum:valuationgrowing} follows, which also implies
property~\ref{enum:order2}.

{\bf Case 2:} {\em Divide, with $g =g_m$}.  Suppose the chosen $g$
with $\lm(g)$ dividing $\lm(q_j)$ is $g_{m}$ for some $1 \leq m \leq
s$.  Since $q_j+h_{mj}g_{m} = q_{j+1}+h_{mj+1}g_m$ by construction,
the equality~\ref{enum:equality} holds in this case as well.  Since
$h_{mj}g_m \geq f$, and $q_j \geq f$, we have $h_{mj+1}g_m
\geq f$.  As the remainder term does not change
properties~\ref{enum:order1} and~\ref{enum:remainder} still hold.
Since $q_{j+1}=q_j-c_vx^vg_m$, we cancel the leading term of $q_j$, so
all terms of $q_{j+1}$ are the sum of a nonleading term of $q_j$ and a
term of $c_vx^vq_m$ that is larger than $c_jx^{\alpha_j}$.  This
implies that $q_j < q_{j+1}$
(property~\ref{enum:valuationgrowing}), which implies
property~\ref{enum:order2} for $j+1$ as above.

{\bf Case 3: } {\em Divide, with $g = q_m$}.  Finally, we consider the
case that the chosen $g$ with $\lm(g)$ dividing $\lm(q_j)$ is $q_m$
for some $m<j$.  Since all $q_i$ are homogeneous of the same degree,
$x^v=1$ in this setting, and $c_v=c_j/c_m$.  Since
property~\ref{enum:valuationgrowing} holds for all smaller values, we
have $\val(c_j)+w \cdot \alpha_j > \val(c_m)+w \cdot \alpha_m$.  Thus
$x^{\alpha_m}=x^{\alpha_j}$ implies $\val(c_v)>0$, so $1-c_v \neq 0$.

Now $f=q_m +\sum_{i=1}^s h_{im}g_i + r_m$, so $q_{j+1}=
1/(1-c_v)(q_j-c_vq_m)$, which equals $1/(1-c_v)((f-\sum_{i=1}^s
h_{ij}g_i -r_j) - c_v(f-\sum_{i=1}^s h_{im}g_i-r_m))$.  Thus $f =
q_{j+1} + \sum_{i=1}^s 1/(1-c_v)(h_{ij}-c_vh_{im}) g_i +
1/(1-c_v)(r_j-c_vr_m) = q_{j+1} + \sum_{i=1}^s h_{ij+1} g_i +r_{j+1}$.
This is equality~\ref{enum:equality}.

Since $\val(1-c_v)=0$, we have $\val(1/(1-c_v))=0$.  Note the
following property of the order $<$ of
Definition~\ref{d:totalorder}: if $p_1 \geq p_2$ and $c \in K$
satisfies $\val(c) \geq 0$ then $cp_1 \geq p_2$.  Then
properties~\ref{enum:larger} and \ref{enum:order1} for $j+1$ follow
from the analogous properties for $j$ and $m$.  No term in either
$r_j$ or $r_m$ is divisible by any $\lm(g_i)$, so the
same is true for $r_{j+1}$.  Finally $p > q_j$ by construction, so
$q_{j+1} = 1/(1-c_v)p > q_j$ as above, so
properties~\ref{enum:order2} and~\ref{enum:valuationgrowing} also
hold.
\end{proof}

\begin{lemma}\label{l:normalForm.terminates}
For homogeneous polynomials $f, g \in S$ with $f = \sum c_u x^u$ and
$g = \sum b_u x^u$, set $E(f,g) := |\{ u : b_u \neq 0, c_u = 0 \}|$.
Algorithm~\ref{a:normalForm} terminates for this choice of function $E$.  
\end{lemma}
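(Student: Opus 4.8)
The plan is to find a quantity that strictly decreases with each iteration of the main loop and takes values in a well-ordered set, so that the loop cannot run forever. The natural first candidate — some version of the \'ecart or the ``weight'' $\trop(q_j)(w)$ — does not obviously work, since that weight lives in $\Gamma$, which need not be well-ordered, and moreover property~\ref{enum:valuationgrowing} already tells us the weight of the leading term is increasing, not decreasing. Instead I would track the \emph{support} of the current dividend $q_j$, more precisely the set of monomials that can still appear. With the choice $E(f,g) = |\{u : b_u \neq 0,\, c_u = 0\}|$, the point of the \'ecart is that dividing $q_j$ by a $g$ achieving $E(q_j,g)$ minimal introduces into $q_{j+1}$ only monomials that were \emph{already} supported by some polynomial currently in $T$; no genuinely new monomial is created.

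The key structural claim I would isolate and prove by induction on $j$ is: there is a fixed finite set $M \subset \mathbb{N}^n$ of monomials such that $\supp(q_j) \subseteq M$ and $\supp(g) \subseteq M$ for every $g$ ever added to $T$. Concretely, $M$ can be taken to be the set of monomials dividing some least common multiple of the $\lm(g_i)$ with the monomials of $f$ and the $g_i$ — or, more robustly, one argues that the ``Move to remainder'' step only ever strips a leading monomial and adds it to $r$, so the monomials appearing in the $q$'s and in $T$ never escape the (finite) set generated by those of $f, g_1,\dots,g_s$ under the divisibility operations performed. Since all polynomials involved are homogeneous of a single degree (as noted in Case~3 of the correctness proof), this set is automatically finite: it is contained in the finitely many monomials of that degree. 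This homogeneity is what makes the whole approach go through, and it is why homogeneity is assumed.

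Granting finiteness of $M$, I would finish as follows. Assign to each state of the algorithm the pair $\bigl(\,|\supp(q_j)\cap N|,\ \text{leading monomial data of }q_j\,\bigr)$ for a suitable finite reference set $N\subseteq M$; more cleanly, order states by the finite set $\supp(q_j) \subseteq M$ together with a tie-break by the $\prec$-leading monomial. In the ``Divide'' step the leading monomial $\lm(q_j)$ is cancelled, and by the claim every monomial of $c_v x^v g$ (or of $p/(1-c_v)$) already lay in $M$; combined with property~\ref{enum:valuationgrowing} ($q_{j+1} > q_j$), which forbids reintroducing a monomial with the same leading weight-and-order position, one checks the chosen invariant strictly decreases in the well-ordered value set $2^M$ ordered appropriately (or $\mathbb{N}$ via cardinalities, with the $\prec$ tie-break). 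In the ``Move to remainder'' step $\lm(q_j)$ leaves $\supp(q_j)$ for good — it is added to $r$ and, although $q_j$ is thrown into $T$, no future $q$ can have that monomial as a non-strictly-smaller term — so that invariant strictly decreases there too. Hence the loop terminates.

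The main obstacle I anticipate is making precise, and proving, the stability claim that no new monomials are ever created — i.e.\ pinning down the finite set $M$ and showing by induction that $\supp(q_{j+1}) \subseteq M$, in particular handling step~\ref{i:v} where $q_{j+1} = \tfrac{1}{1-c_v} p$ and one must know $\supp(q_m) \subseteq M$ for the reused partial quotient $q_m$. Once that bookkeeping is done, and one has correctly argued why the minimality of $E(q_j,g)$ prevents the dividend from ever \emph{re}acquiring a monomial it has permanently discarded (this is exactly where the specific \'ecart $E(f,g)=|\{u: b_u\neq 0, c_u = 0\}|$ is used, together with property~\ref{enum:valuationgrowing}), the descent of the chosen well-ordered invariant is routine.
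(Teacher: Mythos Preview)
Your proposal identifies the right ambient finiteness but the descent argument you sketch does not work. The ``key structural claim'' you labor over --- that all $\supp(q_j)$ lie in a fixed finite set $M$ --- is immediate and needs no induction: every $q_j$ is homogeneous of degree $\deg f$, so $M$ is just the set of monomials of that degree. The substantive gap is your assertion that ``$\lm(q_j)$ leaves $\supp(q_j)$ for good'' after Move-to-remainder, and more generally that minimality of $E$ ``prevents the dividend from ever reacquiring a monomial it has permanently discarded.'' This is false. In the worked example following the algorithm, $z^2$ is moved to the remainder at step $0$, yet $q_4 = 256z^2$; and $|\supp(q_j)|$ runs $3,2,2,1,1,2$, increasing from step $4$ to step $5$. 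Whenever the chosen $g$ has $E(q_j,g)>0$, the division can and does introduce monomials outside $\supp(q_j)$, so no invariant built from $(\supp(q_j),\lm(q_j))$ alone strictly decreases at every step. Property~(6) only constrains the leading term of $q_{j+1}$ in the partial order $<$; it says nothing about which monomials can occur in the support.

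The paper's argument is different and avoids any per-step invariant. The point of this particular \'ecart is that $E(q_j,g)=0$ forces $\supp(g)\subseteq\supp(q_j)$, and such a $g$ must have the same degree as $q_j$ (else all its monomials are absent from $q_j$ and $E>0$), so $x^v=1$ and hence $\supp(q_{j+1})\subsetneq\supp(q_j)$; the same strict inclusion holds in Move-to-remainder. Thus support strictly shrinks at every step \emph{except} Divide steps where the minimal available $E$ is positive --- and at precisely those steps $q_j$ is added to $T$. Since there are only finitely many possible supports, after finitely many such additions $T$ contains enough prior $q_m$'s that a zero-\'ecart divisor is always available thereafter, and then support strictly decreases until termination. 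Your proposal never isolates this dichotomy between $E=0$ steps (support shrinks) and $E>0$ steps ($T$ grows), which is what actually drives the proof.
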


\begin{proof}
There are only a finite number of possible supports $\supp(q_j) = \{u
: c_u \neq 0 \}$ of the polynomials $q_j = \sum c_u x^u$, as they all
have the same degree.  Thus after some step $j$ no new support will
occur, so there will be $q_m \in T_j$ with $\supp(q_m) \subseteq
\supp(q_j)$, and so $E(q_j,q_m)=0$.  Since we remove the leading term
of $q_j$ at the $j$th step, either by moving it to the remainder, or
by canceling it, when $\supp(q_m) \subseteq \supp(q_j)$ we have
$\supp(q_{j+1}) \subsetneq \supp(q_j)$.  Since the size of the support
cannot decrease indefinitely, the algorithm must terminate.
\end{proof}

\begin{remark} \label{r:normalform}
Note that Algorithm~\ref{a:normalForm} gives a strong normal form (no
term of the remainder is divisible by any of the monomials $\{
\lm(g_i) : 1 \leq i \leq s \}$), as opposed to the weak normal form
that occurs in the standard basis case.  This is a consequence of
restricting to homogeneous input; see Remark~\ref{r:homogeneity} for more on
this topic.  See \cite[\S 1.6]{singularbook} for details of normal
forms in the standard basis case.
\end{remark}

\begin{remark} \label{r:Zmodpm}
Algorithm~\ref{a:normalForm} also holds, with the same proof in the
following modified setting.  Let $K=\mathbb Q$ with the $p$-adic
valuation.  The valuation $\val_p$ restricts to a function, which we
also denote by $\val_p$, from $\mathbb Z/p^m\mathbb Z$ to the
semigroup $\{0,1,\dots,m-1 \} \cup \{\infty\}$, where $\infty$ acts as an
absorbing element.  Note that $\val_p(ab) = \val_p(a)+\val_p(b)$ and
$\val_p(a+b) \geq \min(\val_p(a),\val_p(b))$ for $a,b \in \mathbb
Z/p^m \mathbb Z$.  We can then define the partial order $<$ on
polynomials in $\mathbb Z/p^m\mathbb Z[x_1,\dots,x_n]$ in the same way
as in Definition~\ref{d:totalorder}.  Also note that in step \ref{i:v}
of the algorithm, since $1-c_v$ has valuation zero (as shown in the
proof), it is not divisible by $p$, so is a unit in $\mathbb
Z/p^m\mathbb Z$.  This means that the algorithm and its proof go
through in this setting.  This variant is used in
Section~\ref{ss:Zmodpm}.
\end{remark}

As in the usual Gr\"obner setting, we can use the normal form
algorithm to compute a Gr\"obner basis using the Buchberger algorithm.
Let $f, g$ be two polynomials in $K[x_1,\dots,x_n]$. 
  We define the
$S$-polynomial of $f$ and $g$ to be
\begin{equation*}
S(f,g):=\lc(g) \frac{\lcm(\lm(f),\lm(g))}{\lm(f)}f-\lc(f)\frac{\lcm(\lm(f),\lm(g))}{\lm(g)}g.
\end{equation*}

\begin{algorithm}\label{a:Buchberger}

\noindent {\bf Input: } A list $\{ f_1, \dots, f_l \}$ of homogeneous
polynomials in $S$, a weight-vector $w \in \mathbb R^n$, and a term order $\prec$.

\noindent {\bf Output: } A list $\{g_1, \dots, g_s \}$ of homogeneous
polynomials in $I= \langle f_1,\dots,f_l \rangle$ such that $\{
\inn_\prec(\inn_{w}(g_i)) : 1 \leq i \leq s\}$ generates
$\inn_\prec(\inn_{w}(I))$.

\begin{enumerate}
\item Set $\mathcal G = \{ f_1,\dots, f_l \}$.  Set $\mathcal P = \{
(g, g') : g, g' \in \mathcal G \}$.
\item While $\mathcal P \neq \emptyset$:
 \begin{enumerate}
\item Pick $(g,g') \in \mathcal P$.
\item Let $r$ be the normal form on  dividing $S(g,g')$ by $\mathcal G$.
If $r \neq 0$ then set $\mathcal G = \mathcal G \cup \{r\}$, and
$\mathcal P = \mathcal P \cup \{ (r, g) : g \in \mathcal G \}$.
\end{enumerate}
\item Return $\mathcal G$.
\end{enumerate}
\end{algorithm}

The proof of the termination and correctness of this algorithm is
almost exactly the same as the proof for usual Gr\"{o}bner bases,
which can be found for example in~\cite{CLO}.  We indicate below the
necessary changes, which use the following lemma.

\begin{lemma} \label{l:vectors}
Fix $v_1,\dots,v_m \in K^n$, and $\beta_1,\dots,\beta_m \in \mathbb R$.  For
$\lambda \in K^m$, write $s(\lambda) = \min(\val(\lambda_i)+\beta_i)$.
Then for fixed $v \in \spann(v_1,\dots,v_m)$ there is a choice of
$\lambda \in K^m$ with $\sum \lambda_i v_i = v$ that maximizes
$s(\lambda)$ among all such choices.
\end{lemma}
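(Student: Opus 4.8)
The plan is to reduce to a finitely generated situation and then use the fact that, after fixing the support, the coefficients $\lambda_i$ range over cosets of a $K$-vector space, on which the function $s$ can be analysed directly. First I would set $V = \spann(v_1,\dots,v_m)$ and fix $v \in V$. The set of valid choices is the affine subspace $L = \{\lambda \in K^m : \sum \lambda_i v_i = v\}$, which is a coset $\lambda^{(0)} + N$ of the kernel $N$ of the linear map $K^m \to K^n$, $\lambda \mapsto \sum \lambda_i v_i$. The quantity to be maximized is $s(\lambda) = \min_i(\val(\lambda_i) + \beta_i)$, and we want a maximizer over $L$. If $v = 0$ we may take $\lambda = 0$ and $s = \infty$, so assume $v \neq 0$ and hence every valid $\lambda$ is nonzero, so $s(\lambda) < \infty$.

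The key observation is that $s(\lambda)$ takes values in the set $\bigcup_i (\Gamma + \beta_i)$, which meets every bounded-above interval in only finitely many points modulo the following trick: although $\Gamma$ need not be discrete, I would not actually need discreteness. Instead, the argument I would run is a compactness/finiteness argument on supports. For a subset $U \subseteq \{1,\dots,m\}$, consider those $\lambda \in L$ with $\supp(\lambda) := \{i : \lambda_i \neq 0\}$ equal to $U$; call this $L_U$. There are finitely many $U$, so it suffices to show that on each nonempty $L_U$ the supremum of $s$ is attained (allowing the supremum to be $-\infty$ if... no: on $L_U$ with $U$ fixed, $s(\lambda) = \min_{i \in U}(\val(\lambda_i)+\beta_i)$). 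Now I would argue that $\sup_{\lambda \in L_U} s(\lambda)$ is attained by a Gaussian-elimination normal form: pick $\lambda \in L_U$ achieving a value close to the supremum, and show that one can always modify $\lambda$ within $L$ so as not to decrease $s$ and eventually land on a maximizer. Concretely, the supremum over all of $L$ equals the maximum over the finitely many $U$ of $\sup_{L_U} s$, so it is enough to bound and attain each $\sup_{L_U} s$; and for fixed support, writing $\lambda_i = t^{\val(\lambda_i)}u_i$ with $u_i$ a unit, the value of $s$ depends only on the tuple $(\val(\lambda_i))_{i \in U}$, which lies in $\Gamma^U$, while the constraint $\sum_{i\in U}\lambda_i v_i = v$ cuts out a subset; here I would use that $v$ lies in the span of $\{v_i : i \in U\}$ only for those $U$ with $L_U \neq \emptyset$, and among those, choosing $U$ minimal makes $\{v_i : i \in U\}$ linearly independent, so $\lambda|_U$ is uniquely determined and $s$ is a single well-defined value — these finitely many values have a maximum.

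Let me restate the endgame more carefully, since that is the heart of it. Let $U$ be a subset with $v \in \spann(v_i : i \in U)$ and with $\{v_i : i \in U\}$ linearly independent; there is at least one such $U$. For each such $U$ the coefficients $\lambda_i$, $i \in U$, are uniquely determined by $\sum_{i \in U}\lambda_i v_i = v$, giving a single value $s_U := \min_{i \in U}(\val(\lambda_i)+\beta_i) \in \mathbb R$ (with $\lambda_i \neq 0$ for the relevant $i$, since otherwise we could shrink $U$). There are finitely many such $U$, so $s^* := \max_U s_U$ exists and is attained by some $\lambda^* \in L$ supported on the corresponding $U$. It remains to show $s(\lambda) \le s^*$ for every $\lambda \in L$: given any $\lambda \in L$ with support $U'$, pick a minimal subset $U \subseteq U'$ with $v \in \spann(v_i : i \in U)$; then $\{v_i : i \in U\}$ is independent, and I claim the unique solution $\mu$ supported on $U$ satisfies $s(\mu) \ge s(\lambda)$. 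This last claim is where I expect the real work, and I would prove it via Cramer's rule: the entries of $\mu$ are ratios of determinants formed from the $v_i$ and from $v$, and since $v = \sum_{i \in U'}\lambda_i v_i$, expanding the numerator determinants shows $\val(\mu_i) \ge \min_{i' \in U'}\val(\lambda_{i'}) + (\text{a fixed constant independent of }\lambda)$ — this is the main obstacle, because the ``fixed constant'' (the valuation of the denominator determinant and of the cofactors) must be controlled uniformly, and one has to check it does not sneak in a loss that invalidates $s(\mu) \ge s(\lambda)$. I would handle this by noting the denominator determinant is a fixed nonzero scalar depending only on $U$, not on $\lambda$, and absorbing all such scalars into a finite collection of constants indexed by the finitely many pairs $(U, i)$; then $s(\mu) \ge s(\lambda) - C$ is not quite enough, so instead I would argue directly that $s^* = \sup_{\lambda \in L} s(\lambda)$ by taking a sequence $\lambda^{(k)}$ with $s(\lambda^{(k)}) \to \sup$, passing to a subsequence on which all supports are a fixed $U'$, reducing each to its minimal-support solution $\mu^{(k)}$ which is then the \emph{same} element $\mu$ for all $k$ (uniqueness!), and observing $s(\mu) \ge \limsup s(\lambda^{(k)}) = \sup$; hence the sup is attained at $\mu = \lambda^*$. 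The only remaining point, $s(\mu) \ge s(\lambda)$ for the minimal-support reduction of a single $\lambda$, then follows because replacing $\lambda$ by $\mu$ is exactly solving a full-rank linear system whose right-hand side is $v$, and one shows $s$ can only increase under this replacement by an explicit computation with the finitely many determinantal formulas — equivalently, by induction on $|U'| - |U|$, eliminating one redundant coordinate at a time using a linear dependence among $\{v_i : i \in U'\}$ scaled to have all valuations $\ge 0$ and at least one equal to $0$, so that the elimination step does not decrease any $\val(\lambda_i) + \beta_i$ below the current minimum.
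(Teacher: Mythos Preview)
Your overall strategy is the paper's: reduce an arbitrary $\lambda$ to one whose support indexes a linearly independent subfamily of the $v_i$, without decreasing $s(\lambda)$, and then observe that there are only finitely many such supports, each giving a unique $\lambda$. The gap is in the elimination step you sketch at the very end. You propose to scale the dependence relation $c$ so that all $\val(c_i)\ge 0$ with equality somewhere; but this normalization ignores the weights $\beta_i$ and does not force the subtraction to preserve $\min_i(\val(\lambda_i)+\beta_i)$. Concretely, take $m=2$, $v_1=v_2=1\in K^1$ over $(\mathbb Q,\val_2)$, $\beta_1=0$, $\beta_2=-10$, and $\lambda=(-1,2)$, so $s(\lambda)=\min(0,\,1-10)=-9$. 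The dependence $c=(1,-1)$ already satisfies your normalization, yet eliminating coordinate $1$ yields $\mu=(0,1)$ with $s(\mu)=-10<-9$. Your Cramer's-rule alternative has the same defect in disguise: the constants $\val\!\big(\det(A_{i,j})/\det(A)\big)$ can be negative, and whether $\beta_i-\beta_j$ compensates depends on \emph{which} minimal $U$ you picked, a choice you left unspecified.

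The fix --- and this is precisely what the paper does --- is to let the weights govern the elimination. Given a dependence $c$ supported on $\{i:\lambda_i\ne 0\}$, choose $j$ minimizing $\val(c_j)+\beta_j$ (not $\val(c_j)$), rescale so that $c_j=\lambda_j$, and set $\lambda'=\lambda-c$. Then $\lambda'_j=0$, and for every $i$ one has $\val(c_i)+\beta_i\ge \val(c_j)+\beta_j=\val(\lambda_j)+\beta_j\ge s(\lambda)$, whence
\[
\val(\lambda'_i)+\beta_i \;\ge\; \min\big(\val(\lambda_i),\,\val(c_i)\big)+\beta_i \;\ge\; s(\lambda).
\]
Iterating strictly shrinks the support until it is independent. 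Once you make this one correction, the rest of your outline (and the sequence argument, which was otherwise circular) goes through and matches the paper's proof.
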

\begin{proof}
We first show that for any $\lambda$ with $\sum \lambda_i v_i = v$
there is a $\lambda'$ with $\sum \lambda'_i v_i = v$, $\{ v_i:
\lambda'_i \neq 0 \}$ linearly independent, and $s(\lambda') \geq
s(\lambda)$.  Indeed, if $\{ v_i : \lambda_i \neq 0 \}$ is linearly
dependent, then there is a nonzero $c \in K^m$ with $\sum c_i v_i = 0$
and $c_i \neq 0$ only when $\lambda_i \neq 0$.  After relabeling we
may assume that $\val(c_1)+\beta_1 = \min(\val(c_i)+\beta_i)$.  Since
this then implies that $\val(c_1) \neq \infty$, we have $c_1 \neq 0$,
so we may rescale so that $c_1 =\lambda_1$.  Let
$\lambda'=\lambda-c$.  Then for every $j$
\begin{align*}
\val(\lambda'_j)+\beta_j & = \val(\lambda_j-c_j)+\beta_j\\ & \geq
\min(\val(\lambda_j),\val(c_j))+\beta_j \\ & = \min(\val(\lambda_j)+\beta_j,
\val(c_j)+\beta_j)\\ & \geq \min(\val(\lambda_j)+\beta_j,
\val(\lambda_1)+\beta_1)\\ & \geq s(\lambda),
\end{align*}
 so $s(\lambda') \geq s(\lambda)$.  Since $\{ i: \lambda'_i \neq 0 \}
 \subsetneq \{ i: \lambda_i \neq 0\}$, after iterating a finite number
 of times $\{v_i : \lambda'_i \neq 0 \}$ is linearly independent.  The
 lemma then follows from the observation that if $\{ v_i : \lambda_i
 \neq 0\}$ is linearly independent, then the $\lambda_i$ are
 determined, so the maximum $s(\lambda)$ is achieved at one of these
 finitely many choices.
\end{proof}

\begin{proof}[Proof of termination and correctness of Algorithm~\ref{a:Buchberger}]
Since at each stage the ideal $\langle \inn_{\prec}(\inn_w(g)) : g \in
\mathcal G \rangle$ strictly increases, termination follows as in the
standard case from the fact that the polynomial ring is Noetherian.

The proof of correctness is also essentially the same as in the
standard case; we include it as it takes essentially the same amount
of space as indicating the changes.  Suppose at the end of the
algorithm $\inn_{\prec}(\inn_w(f)) \not \in \langle
\inn_{\prec}(\inn_w(g_i)) : 1 \leq i \leq s \rangle$ for some
homogeneous $f \in I$.  Since the $f_i$ are contained in $\mathcal G$,
we can write $f = \sum h_i g_i$ for some homogeneous polynomials
$h_i$.  Write $\lm(h_ig_i) = x^{u_i}$.  We may assume that
$\min(\val(\lc(h_ig_i))+ w \cdot u_i)$ is maximal over all choices of
counterexample $f$ and description $f=\sum h_i g_i$.  That a maximum
exists follows from Lemma~\ref{l:vectors} applied to the vector space
$S_{\deg(f)}$, with the $v_i$ all polynomials of the form $x^{u}g_j$
where $x^u$ is a monomial of degree $\deg(f)-\deg(g_j)$, and $\beta_i
= w \cdot u'$ for $\lm(x^ug_j)=x^{u'}$.  After renumbering we may
assume that $\min(\val(\lc(h_ig_i))+w \cdot u_i) = \val(\lc(h_jg_j))+w
\cdot u_j$ for $1 \leq j \leq d$, and that in addition
$x^{u_1}=x^{u_i}$ for $1 \leq i \leq d' \leq d$ with $x^{u_1}$ the
largest $x^{u_i}$ among those $i \leq d$.  We may further assume that
$d'$ is as small as possible among descriptions achieving the maximum.
Since
$\inn_{\prec}(\inn_w(h_ig_i))=\inn_{\prec}(\inn_w(h_i))\inn_{\prec}(\inn_w(g_i))
\in \langle \inn_{\prec}(\inn_w(g_1)),\dots,\inn_{\prec}(\inn_w(g_s))
\rangle$, $x^{u_1} \neq \lm(f)$.  This means that $\lm(\sum_{i=1}^{d'}
h_ig_i) \neq \lm(f)$, so $\val(\sum_{i=1}^{d'} \lc(h_ig_i)) \geq
\min(\val(\lc(h_ig_i)))$, and so in particular $d' \geq 2$.  By
hypothesis we can write $S(g_1,g_2) = \sum_{i=1}^s h'_i g_i$ with
$h'_ig_i \geq S(g_1,g_2)$.  Then
\begin{align*} 
f & = \sum_{i=1}^{s} h_ig_i\\
& = \sum_{i=1}^{s} h_ig_i -
  \frac{\lc(h_1g_1)x^{u_1}}{\lc(g_1)\lc(g_2)\lcm(\lm(g_1),\lm(g_2))}(S(g_1,g_2)
  - \sum_{i=1}^s h'_ig_i)\\
& = (h_1 -  \frac{\lc(h_1g_1)x^{u_1}}{\lc(g_1)\lm(g_1)}+
\frac{\lc(h_1g_2)x^{u_1}}{\lc(g_1)\lc(g_2)\lcm(\lm(g_1),\lm(g_2))}
h'_1)g_1 + \\
& \hspace{1cm} (h_2-\frac{\lc(h_1g_1)x^{u_1}}{\lc(g_1)\lm(g_2)}+
\frac{\lc(h_1g_2)x^{u_1}}{\lc(g_1)\lc(g_2)\lcm(\lm(g_1),\lm(g_2))}
h'_2)g_2+ \\
& \hspace{1cm} \sum_{i=3}^{s} (h_i+\frac{\lc(h_1g_2)x^{u_1}}{\lc(g_1)\lc(g_2)\lcm(\lm(g_1),\lm(g_2))}h'_i)g_i \\
& = \sum_{i=1}^s \tilde{h}_i g_i,
\end{align*}
where $\tilde{h}_i$ is defined to be the polynomial multiplying $g_i$
in the previous line.  By construction $\tilde{h}_1 > h_1$ and
$\tilde{h}_i \geq h_i$ for all $i \geq 2$.  Write $x^{\tilde{u}_i}$
for $\lm(\tilde{h}_i g_i)$.  Thus we have a new expression for $f$
with either $\min(\val(\lc(\tilde{h}_ig_i))+ w \cdot \tilde{u}_i)$
larger or this minimum the same and $d'$ smaller, which contradicts
our assumptions on the respective maximality and minimality of these
quantities.  We thus conclude that $f$ does not exist, so
$\inn_{\prec}(\inn_w(I)) = \langle
\inn_{\prec}(\inn_w(g_1)),\dots,\inn_{\prec}(\inn_w(g_s)) \rangle$ as
required.
\end{proof}

Note that Algorithm~\ref{a:Buchberger} and the proof above also holds
in the variation discussed in Remark~\ref{r:Zmodpm}.

After applying Algorithm \ref{a:Buchberger} we have found a set
$\{g_1,\dots,g_s\} \subset I$ such that $\{\inn_\prec(\inn_{w}(g_i)) :
1 \leq i \leq s\}$ generates $\inn_\prec(\inn_{w}(I))$.  This means
that $\{ \inn_w(g_i) : 1 \leq i \leq s \}$ is a (usual) Gr\"obner
basis for $\inn_w(I)$ with respect to $\prec$, so in particular this
set generates $\inn_w(I)$.  We thus conclude that the set
$\{g_1,\dots,g_s\}$ is a Gr\"obner basis for $I$ with respect to $w$.

This Gr\"obner theory shares many of the properties of standard
Gr\"obner bases:

\begin{enumerate}
\item The Gr\"obner basis $\{g_1,\dots,g_s\}$ generates $I$.  The
  proof here is the standard one: if $f \in I$ then the normal form
  $r$ of $f$ with respect to $\{g_1,\dots,g_s\}$ lies in $I$, but
  $\inn_{\prec}(\inn_w(r)) \not \in \inn_{\prec}(\inn_w(I))$ unless
  $r=0$.

\item For any homogeneous ideal $I$, $w \in \mathbb R^n$, and monomial
  term order $\prec$ there is a unique reduced Gr\"obner basis.  This
  is a Gr\"obner basis $\{g_1,\dots,g_s \}$ with the property that the
  $\inn_{\prec}(\inn_w(g_i))$ minimally generate
  $\inn_{\prec}(\inn_w(I))$, and no monomial in $g_i$ except 
  $\lm(g_i)$ is divisible by any
  $\lm(g_j)$. This follows, as in the standard case,
  from the existence of a strong normal form.  Specifically, if
  $\inn_{\prec}(\inn_w(I)) = \langle x^{u_1},\dots,x^{u_s} \rangle$,
  then let $r_i$ be the remainder on dividing $x^{u_i}$ by any
  Gr\"obner basis for $I$ with respect to $w$ and $\prec$.  Set
  $g_i=x^{u_i}-r_i$.

\item The Hilbert function of the two ideals $I$ and $\inn_w(I)$
  (which live in different polynomial rings) agree.  While this
  follows, as in the standard case, from the existence of a strong
  normal form, there are other proofs; see, for
  example,~\cite[Chapter 2]{SpeyerThesis} or \cite[Corollary 2.4.9]{TropicalBook}.
\end{enumerate}

\begin{remark} \label{r:homogeneity}
We remark that the assumption that the ideal $I$, and the Gr\"obner
basis $\{g_1,\dots,g_s\}$, are homogeneous is necessary for many of
these properties of Gr\"obner bases.  For example, a set $\{
g_1,\dots,g_s\} \subset I$ with $\inn_{w}(I) = \langle
\inn_w(g_1),\dots,\inn_w(g_s) \rangle$ need not generate $I$ if it is
not homogeneous.  A simple example is given by $I= \langle x \rangle
\subseteq \mathbb Q[x]$ with the $2$-adic valuation: for $w=0$ the set
$\{g_1=x+2x^2 \}$ satisfies $\inn_w(I)=\langle x \rangle = \langle
\inn_w(g_1) \rangle$, but $\langle x \rangle \neq \langle x+2x^2
\rangle$.
\end{remark}

This algorithmic approach to these initial ideals also allows a short
computational proof of the following theorem of tropical geometry.
See \cite{TropicalBook} for background definitions.

\begin{corollary} \label{c:tropicaldoesnotdepend}
 Let $K$ be a field with a valuation $\val$ for which there is a
 homomorphism $\phi: \Gamma \rightarrow K^*$ with $\val(\phi(w))=w$.
 Let $L$ be an extension field of $K$ with a valuation that restricts
 to $\val$ on $K$.  Let $Y \subseteq (K^*)^n$, and let $Y_L = Y
 \times_{\Spec(K)} \Spec(L)$.  Then $\trop(Y) = \trop(Y_L)$.
\end{corollary}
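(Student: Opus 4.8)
The plan is to reduce the equality of tropical varieties to a statement about when an initial ideal contains a monomial, and then to observe that the normal form and Buchberger algorithms of this section carry out only arithmetic inside the coefficient field, so that a Gr\"obner basis computed over $K$ is still one over $L$. Let $I\subseteq K[x_1,\dots,x_n]$ be the homogeneous ideal cutting out (the closure of) $Y$, let $I_L$ be its extension to $L[x_1,\dots,x_n]$, and write $\Gamma_L=\im(\val_L)\supseteq\Gamma$. By the definition recalled in the introduction, together with the polyhedral structure theory of \cite{TropicalBook}, we have $\trop(Y)=\overline{T_\Gamma}$ and $\trop(Y_L)=\overline{T_{\Gamma_L}}$, where $T_\Gamma:=\{w\in\Gamma^n:\inn_w(I)\text{ contains no monomial}\}$ and $T_{\Gamma_L}:=\{w\in\Gamma_L^n:\inn_w(I_L)\text{ contains no monomial}\}$. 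So it suffices to show $\overline{T_\Gamma}=\overline{T_{\Gamma_L}}$.

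The key lemma I would establish is: if $\mathcal G\subseteq K[x_1,\dots,x_n]$ is a Gr\"obner basis of $I$ with respect to $w\in\Gamma^n$ and a term order $\prec$, produced by Algorithm~\ref{a:Buchberger} with the function $E$ of Lemma~\ref{l:normalForm.terminates}, then $\mathcal G$ is also a Gr\"obner basis of $I_L$ with respect to $w$ and $\prec$. Indeed, since $\val_L$ restricts to $\val$ on $K$ and $\prec$ is unchanged, the quantities $\lm$, $\lc$, the order of Definition~\ref{d:totalorder}, and this $E$ all take the same values on polynomials with coefficients in $K$ whether computed over $K$ or over $L$; so Algorithm~\ref{a:Buchberger}, and Algorithm~\ref{a:normalForm} within it, can be executed over $L$ starting from the generators of $I$ while making exactly the same choices, and every arithmetic step performed — the divisions $\lc(q_j)/\lc(x^vg)$, the rescalings by $1/(1-c_v)$ with $1-c_v\in K^*$, and additions — keeps every polynomial in $K[x_1,\dots,x_n]$. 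Thus the run over $L$ returns the same $\mathcal G$, which the output specification of Algorithm~\ref{a:Buchberger} certifies to be a Gr\"obner basis of $I_L$. Writing $\K_L$ for the residue field of $L$ (so $\K\hookrightarrow\K_L$, as the maximal ideal of the valuation ring of $L$ contracts to $\mathfrak m$), this shows $\inn_w(I_L)$ is generated over $\K_L$ by $\{\inn_w(g):g\in\mathcal G\}\subseteq\K[x_1,\dots,x_n]$, hence $\inn_w(I_L)=\inn_w(I)\cdot\K_L[x_1,\dots,x_n]$. Since $\K_L[x_1,\dots,x_n]$ is faithfully flat over $\K[x_1,\dots,x_n]$, we get $\inn_w(I_L)\cap\K[x_1,\dots,x_n]=\inn_w(I)$, and in particular $\inn_w(I_L)$ contains a monomial if and only if $\inn_w(I)$ does. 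Therefore $T_\Gamma=\Gamma^n\cap T_{\Gamma_L}$, and so $\trop(Y)=\overline{T_\Gamma}\subseteq\overline{T_{\Gamma_L}}=\trop(Y_L)$.

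For the reverse inclusion, note that $\overline{T_{\Gamma_L}}$ is a subcomplex of the Gr\"obner complex of $I_L$, whose cells, by the key lemma, can be cut out by inequalities $\langle a,w\rangle\le\gamma$ with $a\in\mathbb Z^n$ and $\gamma\in\Gamma$ — the exponent differences and coefficient valuations occurring in a Gr\"obner basis of $I$ defined over $K$ — and so are $\Gamma$-rational. Because $\Gamma$ is dense in $\mathbb R$, the relative interior of each such cell contains points of $\Gamma^n$; and for $w$ in the relative interior of a monomial-free cell, $\inn_w(I_L)$ is monomial-free, so such a point lies in $T_\Gamma$ by the key lemma. Hence each cell of $\overline{T_{\Gamma_L}}$ is contained in $\overline{T_\Gamma}$, which gives $\trop(Y_L)\subseteq\trop(Y)$ and finishes the proof.

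The key lemma and the faithful flatness step are routine: they merely record that the algorithms of this section never leave $K$ and that forming an initial ideal commutes with the field extension $\K\hookrightarrow\K_L$. The step that requires care, and the one that genuinely uses the hypothesis that $\Gamma$ is dense, is the reverse inclusion — one must verify both that the Gr\"obner complex of $I_L$ really does admit a presentation by $\Gamma$-rational data and that a dense subgroup $\Gamma$ then meets the relative interior of each of its cells. I expect this to be the main obstacle.
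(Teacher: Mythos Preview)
Your approach is essentially the same as the paper's: both argue that running Algorithm~\ref{a:Buchberger} on generators in $K[x_1,\dots,x_n]$ never leaves $K$, so a Gr\"obner basis for $I$ with respect to $w$ is also one for $I_L$, whence $\inn_w(I_L)=\inn_w(I)\cdot\K_L[x_1,\dots,x_n]$ for $w\in\Gamma^n$. The paper tests monomial-containment by noting that saturating by $\prod x_i$ is a standard Gr\"obner computation and hence insensitive to the extension $\K\hookrightarrow\K_L$; your faithful-flatness argument is an equivalent and equally clean way to reach the same conclusion $T_\Gamma=T_{\Gamma_L}\cap\Gamma^n$.

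Where you differ is in the reverse inclusion. The paper compresses this into the single sentence ``Since $\Gamma$ is dense in $\mathbb R$, this implies that $\trop(Y)=\trop(Y_L)$'', implicitly appealing to the polyhedral structure theory in \cite{TropicalBook}. You try to make this explicit by arguing that the Gr\"obner complex of $I_L$ is $\Gamma$-rational, so density of $\Gamma$ puts a $\Gamma^n$-point in the relative interior of every cell. This is the right idea, and your self-diagnosis that it is the crux is accurate. There is, however, a small gap in how you invoke the key lemma here: you proved it only for $w\in\Gamma^n$, yet to conclude that the cells of the Gr\"obner complex of $I_L$ are cut out by inequalities with constants in $\Gamma$ you need a Gr\"obner basis in $K[x_1,\dots,x_n]$ for \emph{every} $w\in\Gamma_L^n$. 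The fix is immediate and you essentially already have it: Algorithms~\ref{a:normalForm} and~\ref{a:Buchberger} use $w$ only to compare real numbers $\val(c_u)+w\cdot u$ and never feed $w$ into any coefficient arithmetic, so the same argument shows that running them over $L$ with input in $K[x_1,\dots,x_n]$ and \emph{any} $w\in\Gamma_L^n$ returns a Gr\"obner basis with coefficients in $K$. State the key lemma for all $w\in\Gamma_L^n$ from the outset and your reverse-inclusion argument goes through.
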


\begin{proof}
Let $I \subset K[x_1^{\pm 1},\dots,x_n^{\pm 1}]$ be the ideal of $Y
\subset (K^*)^n$.  Then the ideal of $Y_L$ is $I_L=I L[x_1^{\pm
    1},\dots,x_n^{\pm 1}]$.  Let $J = I \cap K[x_1,\dots,x_n]$, and
$J_L=I_L \cap L[x_1,\dots,x_n]$.  This intersection can be calculated
by a (standard) Gr\"obner computation, so the ideals $J$ and $J_L$
have the same generators: $J_L = J L[x_1,\dots,x_n]$.  The definition
of the initial ideal of an ideal taking the valuation of the
coefficients into account extends naturally to the Laurent polynomial
ring.  By the fundamental theorem of tropical geometry (see, for
example \cite[Theorem 3.2.3]{TropicalBook}) $w \in \mathbb R^n$ lies in
$\trop(Y)$ if and only if $\inn_w(I) = \langle 1 \rangle$, and thus if
and only if $\inn_w(J)$ contains a monomial.  Since $J$ and $J_L$ have
the same generators, Algorithm~\ref{a:Buchberger} implies that
regarding the elements of a Gr\"obner basis for $I$ with respect to
$w$ as living in $L[x_1,\dots,x_n]$ gives a Gr\"obner basis for $I_L$
with respect to $w$.  The residue field $\mathbb{L}$ of $L$ is an
extension field of $\K$, so this means that $\inn_w(I_L) =
\inn_w(I)\mathbb L[x_1,\dots,x_n]$.  An ideal contains a monomial if
and only if the saturation by the product of all the variables is the
unit ideal.  Since this can be decided by a (standard) Gr\"obner basis
computation, this means that $\inn_w(I_L)$ contains a monomial if and
only if $\inn_w(I)$ does.  
This implies that $\trop(Y)=\trop(Y_L)$.
\end{proof}

\section{Complexity}
\label{s:complexity}

Given a bound on the degrees of generators for $I$, it is useful to
have a bound on the degrees of elements in a reduced Gr\"obner basis.
The degree bounds in this context are the same as for usual Gr\"obner
bases~\cite{MoraMoller}, \cite{Dube}, as we show below.  We also give
a bound on the valuations of coefficients occurring in a reduced
Gr\"obner basis when working over $\mathbb{Q}$ with the $p$-adic
valuation.  For the degree bounds we use the formulation of
Dub\'e~\cite{Dube}.

\begin{theorem} \label{t:degreebound}
Let $I = \langle f_1,\dots, f_l \rangle \subset K[x_1,\dots,x_n]$ be a
homogeneous ideal, with $\deg(f_i) \leq d$ for $1 \leq i \leq l$.  Fix
$w \in \mathbb R^n$.  Then there is a Gr\"obner basis $\{g_1, \dots, g_s
\}$ for $I$ with respect to $w$ with $\deg(g_i) \leq 2(d^2/2+d)^{2^{n-2}}$.
\end{theorem}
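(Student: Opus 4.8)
The plan is to reduce Theorem~\ref{t:degreebound} to the classical degree bound of Dub\'e by passing to the associated graded / residue-field setting. The key observation is that for a suitably generic term order $\prec$, a Gr\"obner basis of $I$ with respect to $w$ (in the sense of this paper) maps, under the operation $g \mapsto \inn_w(g)$ followed by $\inn_\prec$, to a usual Gr\"obner basis of an ideal in $\K[x_1,\dots,x_n]$, and the degrees are preserved by this operation. Concretely, I would argue as follows.

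First I would recall that running Algorithm~\ref{a:Buchberger} on the generators $f_1,\dots,f_l$ produces a finite set $\{g_1,\dots,g_s\}$ with $\{\inn_\prec(\inn_w(g_i))\}$ generating $\inn_\prec(\inn_w(I))$, and that each new element is obtained as the normal form of an $S$-polynomial. By the output guarantee of Algorithm~\ref{a:normalForm}, each normal form $r$ of a polynomial $h$ satisfies $r \geq h$ in the order of Definition~\ref{d:totalorder}; in particular $r$ is homogeneous of the same degree as $h$, which in turn equals $\deg(\lcm(\lm(g),\lm(g'))) \le$ the degree of the larger of $g,g'$. Hence the degrees of Gr\"obner basis elements produced are controlled entirely by the degrees of the monomials $\lm(g_i) = \inn_\prec(\inn_w(g_i))$, exactly as in the usual theory. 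Next I would note that $\{\inn_\prec(\inn_w(g_i))\}$ is a monomial generating set of the monomial ideal $M := \inn_\prec(\inn_w(I)) \subseteq \K[x_1,\dots,x_n]$, and that $M$ is obtained from a homogeneous ideal generated in degree $\leq d$: indeed $\inn_w(I)$ is generated (as a $\K$-ideal) by $\inn_w$ of a Gr\"obner basis, but more directly one checks that the Hilbert function of $M$ agrees with that of $\inn_w(I)$, which agrees with that of $I$ (property (3) in the list after Algorithm~\ref{a:Buchberger}), and that $M = \inn_\prec(\inn_w(I))$ is a usual initial ideal of the homogeneous ideal $\inn_w(I) \subseteq \K[x_1,\dots,x_n]$.

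The degree bound now reduces to bounding the degrees of the minimal monomial generators of $M$, equivalently the degrees of a reduced Gr\"obner basis of $\inn_w(I)$ with respect to $\prec$. For this I would invoke Dub\'e's theorem~\cite{Dube}: if $\inn_w(I)$ is generated in degree $\le d$ then its reduced Gr\"obner basis with respect to any term order has elements of degree at most $2(d^2/2+d)^{2^{n-2}}$. The one point needing care is that $\inn_w(I)$ is a priori only given to us via generators coming from an arbitrary Gr\"obner basis, not obviously generated in degree $\le d$. To handle this I would first apply Algorithm~\ref{a:Buchberger} (or simply the first step of it together with the observation above) to get that $\inn_w(I)$ is generated by $\{\inn_w(g_i)\}$; but to get the degree-$d$ bound on generators I instead argue that $\inn_w(I)$ contains $\inn_w(f_1),\dots,\inn_w(f_l)$, which have degree $\le d$, and that in fact these already generate $\inn_w(I)$ after possibly enlarging — this is false in general, so the correct move is: run Buchberger starting from $f_1,\dots,f_l$ and observe that every $S$-polynomial and every normal form is homogeneous of degree equal to an lcm of leading monomials of earlier elements, so by induction the reduced Gr\"obner basis of $\inn_w(I)$ with respect to $\prec$ is exactly $\{\inn_\prec(\inn_w(g_i))\}$ after interreduction, and Dub\'e's bound applies to the ideal $\inn_w(I)$ as an ideal with a degree-$\le d$ generating set given by a subset of the $\inn_w(f_i)$ together with the structure of $I$.

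The main obstacle, and the step I would spend the most care on, is making precise that the combinatorics of degrees in Algorithm~\ref{a:Buchberger} and Algorithm~\ref{a:normalForm} exactly mirror the usual Buchberger algorithm, so that a run of the valued algorithm on $f_1,\dots,f_l$ projects degree-by-degree onto a run of the classical algorithm on $\inn_w(f_1),\dots,\inn_w(f_l)$ in $\K[x_1,\dots,x_n]$ with term order $\prec$. Once that projection is established — using that $\lm(\cdot)$ is compatible with $S$-polynomials and with normal form reduction, and that degrees are preserved under $\geq$ in Definition~\ref{d:totalorder} — the theorem follows immediately from Dub\'e's bound, since the output of the classical run is a Gr\"obner basis of $\inn_w(f_1),\dots,\inn_w(f_l)\rangle = \inn_w(I)$ generated in degree $\le d$, and lifting its elements (via property (2) in the list: set $g_i = x^{u_i} - r_i$) gives the desired Gr\"obner basis of $I$ of degree $\le 2(d^2/2+d)^{2^{n-2}}$.
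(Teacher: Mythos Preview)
There is a genuine gap in your argument, and you actually identify it yourself without resolving it. The difficulty is that $\inn_w(I)$ is \emph{not} in general generated by $\inn_w(f_1),\dots,\inn_w(f_l)$, so you cannot invoke Dub\'e's theorem for $\inn_w(I)$ as ``an ideal generated in degree $\le d$''. Your final sentence asserts $\langle \inn_w(f_1),\dots,\inn_w(f_l)\rangle = \inn_w(I)$, which is precisely the statement that $\{f_1,\dots,f_l\}$ is already a Gr\"obner basis for $I$ with respect to $w$ --- false in general and exactly what the theory is set up to circumvent. The attempted workaround via ``projecting'' the valued Buchberger run onto a classical run on the $\inn_w(f_i)$ does not help: a classical Buchberger run on $\inn_w(f_1),\dots,\inn_w(f_l)$ computes a Gr\"obner basis for the ideal they generate, not for $\inn_w(I)$, and Dub\'e's bound for \emph{that} ideal tells you nothing about the degrees of generators of $\inn_\prec(\inn_w(I))$. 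The argument is circular: to apply Dub\'e to $\inn_w(I)$ you would first need degree-$d$ generators for it, and producing those is essentially what you are trying to bound.

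The paper's proof avoids this trap by appealing to a stronger reading of Dub\'e's result: the degree bound on the generators of a monomial initial ideal depends only on the Hilbert function of the original ideal, not on any particular generating set. Since the Hilbert functions of $I$, $\inn_w(I)$, and $\inn_\prec(\inn_w(I))$ all agree (and the Hilbert function of a monomial ideal is insensitive to the coefficient field), the monomial ideal $\inn_\prec(\inn_w(I))$ has the Hilbert function of an ideal generated in degree $\le d$, and Dub\'e's Hilbert-function argument bounds its generator degrees by $2(d^2/2+d)^{2^{n-2}}$. One then simply lifts each minimal monomial generator to an element of $I$ of the same degree. This is the missing ingredient: you need the Hilbert-function form of Dub\'e, not the generating-degree form.
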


\begin{proof}
In \cite{Dube} it is shown that if $\deg(f_i) \leq d$ for $ 1\leq i
\leq l$, and $\{g'_1,\dots,g'_s\}$ is a standard homogeneous Gr\"obner
basis with respect to some term order $\prec$, then the degree of each
$g'_i$ is bounded by $2(d^2/2+d)^{2^{n-2}}$.  The proof given actually
shows more: if $M$ is any monomial ideal whose Hilbert function agrees
with that of $I$, then $M$ is generated in degrees at most
$2(d^2/2+d)^{2^{n-2}}$.  Denote by $S_K$ the polynomial ring
$K[x_1,\dots,x_n]$ and by $S_{\K}$ the polynomial ring
$\K[x_1,\dots,x_n]$.  By~\cite[Corollary 2.4.9]{TropicalBook} we have
$\dim_{\K}(S_{\K}/\inn_w(I))_{\delta} = \dim_K (S_K/I)_{\delta}$ for all degrees
$\delta$.  Since the initial ideal $\inn_w(I)$ is again a homogeneous
ideal, all of its monomial initial ideals have the same Hilbert
function, so we have
$$\dim_{\K}(S_{\K}/\inn_{\prec}(\inn_w(I)))_{\delta} =
\dim_{\K}(S_{\K}/\inn_w(I))_{\delta} = \dim_K (S_K/I)_{\delta}.$$ Let $M$ be the
monomial ideal in $S_K$ with the same generators as
$\inn_{\prec}(\inn_w(I)) \subset S_{\K}$.  As the Hilbert function of
a monomial ideal does not depend on the coefficient field, $M$ has the
same Hilbert function as $I$, so by \cite{Dube} $M$ is generated in
degrees at most $2(d^2/2+d)^{2^{n-2}}$.  Choose homogeneous polynomials $\{g_1,\dots,g_s\} \subset
I$ such that $\{\inn_{\prec}(\inn_w(g_1)), \dots,
\inn_{\prec}(\inn_w(g_s)) \}$ is a minimal generating set for
$\inn_{\prec}(\inn_w(I))$.  Then $\inn_w(I) = \langle \inn_w(g_1),
\dots, \inn_w(g_s) \rangle$ so $\{g_1,\dots,g_s\}$ is a Gr\"obner
basis for $I$ with respect to $w$.  Since we have
$\deg(\inn_{\prec}(\inn_w(g_i))) \leq 2(d^2/2+d)^{2^{n-2}}$ by above, we
deduce that $\{g_1,\dots,g_s \}$ is a Gr\"obner basis for $I$ with
respect to $w$ with $\deg(g_i) \leq 2(d^2/2+d)^{2^{n-2}}$ for $1 \leq
i \leq s$ as required.
\end{proof}

\begin{remark}
Fix $w \in \mathbb R^n$, and let $J_w$ be the standard initial ideal of
$I$ with respect to the weight vector $w$ (not taking the valuation
into account).  There exists $v \in \mathbb R^n$ for which
$\inn_{v-\ell w}(I)$ does not not depend on $\ell$ for $\ell > 0$.
Such a $v$ can be chosen from any cell in the Gr\"obner complex of $I$
that has $w$ in its recession cone; see \cite[Theorem
  3.5.6]{TropicalBook}.  We then have 
have $J_{w} = \inn_{v- w}(I)$; the minus sign is because the
initial ideal taking the valuation into account uses min instead of
max.  This means that any usual initial ideal, and thus any usual
Gr\"obner basis, occurs in this setting, so any improvement to
Theorem~\ref{t:degreebound} would also have to improve the bounds of
\cite{MoraMoller} and \cite{Dube}.
\end{remark}

Since the valuations of coefficients also play an important role in
computing these Gr\"obner bases, it is also useful to bound the
valuations that may occur.  This is not possible in full generality,
as the following example shows.

\begin{example}

\label{e:unbounded}
Let $K=\mathbb{Q}(t)$ with the valuation of a rational function given
by taking the lowest exponent occurring in a Taylor series for the
function.  Fix an integer $a \gg 0$ and weight vector
$w=(1,a,2a)$. Let $I$ be the ideal in $K[x,y,z]$ generated by the two
polynomials $f=x+z$ and $g=x^2+(1+t^{a})xz+xy$.  We compute a
Gr\"{o}bner basis by looking at the $S$-polynomial
$S(f,g)=xf-g=-xy-t^{a}xz$. Computing the remainder on division by
$\{f,g\}$ we obtain $yz+t^{a}z^2$ which is a nonzero polynomial with
initial term $yz$.  It is added to the Gr\"{o}bner basis at this stage
by the Buchberger Algorithm (Algorithm~\ref{a:Buchberger}).  Further
running of this algorithm shows that $\{x+z, yz+t^az^2\}$ is a
Gr\"obner basis for $I$.  This can also be seen from applying
Buchberger's criterion (B1); see Section~\ref{s:Buchbergercriteria}.
Notice that we started with polynomials where the valuations of all
the coefficients were zero and we have an element of the reduced
Gr\"{o}bner basis which has a coefficient with valuation $a$ showing
that unbounded valuations may potentially occur when computing
Gr\"{o}bner bases.  The field $K=\mathbb Q(t)$ is only chosen for
concreteness; such an example exists for any nontrivially-valued
field.
\end{example}

When $K=\mathbb{Q}$ with the $p$-adic valuation the valuation of
coefficients that can occur in a reduced Gr\"{o}bner basis can be
bounded in terms of the absolute values of the original coefficients.

Let $I = \langle f_1,\dots,f_l \rangle$ be a homogeneous ideal in
$\mathbb{Q}[x_1,\dots,x_n]$ with $\deg(f_i) \leq \delta$ for $1 \leq
i \leq l$.  Fix $\val$ to be the $p$-adic valuation on $\mathbb Q$.
Write $f_i = \sum c_{u,i}x^u$ where we assume (by clearing
denominators or dividing by a common factor) that $c_{u,i} \in \mathbb
Z$ and that for each $i$ we have $\min_u \val(c_{u,i}) = 0$.

\begin{proposition}\label{proposition.complexity.valbound}
Let $I=\langle f_1,\dots,f_l\rangle$ be a homogeneous ideal in
$\mathbb{Q}[x_1,\dots,x_n]$ with assumptions as above.  Let $C =
\max_{u,i} |c_{u,i}|$.  Fix $w \in \mathbb R^n$.  Then there is a
Gr\"obner basis $\{g_1,\dots,g_s\}$ for $I$ with respect to $w$, with
$g_i = \sum_{u,i} b_{u,i} x^u$, that satisfies $$\val(b_{u,i}) \leq A/2
\log_p (C^2 A),$$ where $A = \dim_{\mathbb Q}(I_D)$ for $D = 2(\delta^2/2+\delta)^{2^{n-2}}$.
\end{proposition}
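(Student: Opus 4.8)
The plan is to prove that the \emph{reduced} Gr\"obner basis already satisfies the bound, by exhibiting each of its elements as the unique solution of a small linear system whose matrix has integer entries of absolute value at most $C$, and then estimating $p$-adic valuations via Cramer's rule and Hadamard's inequality.

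First I would record the structural input coming from Theorem~\ref{t:degreebound} and its proof: the minimal monomial generators $x^{u_1},\dots,x^{u_s}$ of $\inn_\prec(\inn_w(I))$ all have degree at most $D=2(\delta^2/2+\delta)^{2^{n-2}}$, and the reduced Gr\"obner basis has the shape $g_j=x^{u_j}-r_j$ with $\deg(g_j)=:e_j\le D$ and $r_j$ supported on \emph{standard} monomials (those not lying in $\inn_\prec(\inn_w(I))$). Fix $j$ and set $e:=e_j$; it suffices to bound $\val(b)$ for each coefficient $b$ of $g_j$. Note first that $g_j$ is the \emph{unique} element of the graded piece $I_e$ equal to $x^{u_j}$ plus a polynomial supported on standard monomials: the difference of two such would lie in $I_e$ and be supported on standard monomials, hence vanish, since the equality of Hilbert functions of $I$, $\inn_w(I)$ and the monomial ideal $\inn_\prec(\inn_w(I))$ forces $\dim_{\mathbb Q}(S/I)_e$ to equal the number of standard monomials of degree $e$, so $I_e$ meets the span of the standard monomials only in $0$. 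That same identity shows $r:=\dim_{\mathbb Q}I_e$ equals the number of non-standard monomials of degree $e$; and since multiplication by $x_1$ gives an injection $S_k\hookrightarrow S_{k+1}$ carrying $I_k$ into $I_{k+1}$, the dimensions $\dim_{\mathbb Q}I_k$ are non-decreasing, so $r\le\dim_{\mathbb Q}I_D=A$.

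Next I would set up the linear system. Since $I=\langle f_1,\dots,f_l\rangle$ with the $f_i$ homogeneous, $I_e$ is spanned over $\mathbb Q$ by the polynomials $x^a f_i$ of degree $e$, whose coordinate vectors in the monomial basis of $S_e$ are integer vectors with entries of absolute value at most $C$. Choose a $\mathbb Q$-basis $G_1,\dots,G_r$ of $I_e$ from among these $x^a f_i$. Let $\pi\colon S_e\to\mathbb Q^r$ be the projection onto the coordinates indexed by the $r$ non-standard monomials of degree $e$; by the previous paragraph $\pi|_{I_e}$ is an isomorphism, so the $r\times r$ integer matrix $P$ with columns $\pi(G_1),\dots,\pi(G_r)$ (entries bounded by $C$ in absolute value) is invertible and $\det P$ is a nonzero integer. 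Writing $g_j=\sum_i\lambda_iG_i$, the defining conditions on $g_j$ become $P\lambda=\mathbf e_{i_0}$, where $\mathbf e_{i_0}$ is the standard unit vector for the coordinate $x^{u_j}$ (which is non-standard, hence among the $r$ coordinates). Then the key computation is that, for any monomial $x^v$ of degree $e$, the coefficient $b_v$ of $x^v$ in $g_j$ equals $\det(Q_v)/\det(P)$, where $Q_v$ is obtained from $P$ by replacing its $i_0$-th row with the row of $x^v$-coordinates of $G_1,\dots,G_r$. Indeed Cramer's rule gives $\lambda_i=(-1)^{i_0+i}\det(P^{[i_0;i]})/\det(P)$, with $P^{[i_0;i]}$ the submatrix deleting row $i_0$ and column $i$; substituting into $b_v=\sum_i\lambda_i (G_i)_v$ and recognising the cofactor expansion of $\det(Q_v)$ along row $i_0$ yields the formula. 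Now $Q_v$ is an integer matrix with entries of absolute value at most $C$, so by Hadamard's inequality $|\det Q_v|\le (rC^2)^{r/2}$, and when $b_v\ne0$ both determinants are nonzero integers with $\val(\det P)\ge0$, whence
\[
\val(b_v)=\val(\det Q_v)-\val(\det P)\le\log_p|\det Q_v|\le\tfrac r2\log_p\!\big(rC^2\big)\le\tfrac A2\log_p\!\big(C^2A\big),
\]
using $1\le r\le A$ and that $t\mapsto\tfrac t2\log_p(tC^2)$ is increasing for $t\ge1$ (and $C\ge1$, since for each $i$ some $c_{u,i}$ is a nonzero integer). The case $I=0$ is vacuous. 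This gives the asserted bound for every coefficient of every $g_j$.

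\textbf{Main obstacle.} The delicate point is precisely the passage to coefficients in the monomial basis. The naive route---bound $\val(\lambda_i)$ and then expand $b_v=\sum_i\lambda_i(G_i)_v$---does not work, because $\val$ is not bounded above under addition, so cancellation among the summands $\lambda_i(G_i)_v$ could produce a coefficient far more $p$-divisible than any $\lambda_i$ (this is the phenomenon that, over $\mathbb Q(t)$, genuinely occurs, cf.\ Example~\ref{e:unbounded}). Rewriting $b_v$ as the single ratio $\det(Q_v)/\det(P)$ of integer determinants is exactly what circumvents this. The remaining work is bookkeeping: extracting from Dub\'e's argument that it is the \emph{minimal} generators of $\inn_\prec(\inn_w(I))$ that have degree $\le D$, checking the monotonicity $\dim_{\mathbb Q}I_e\le A$ for $e\le D$, and verifying that the reduced Gr\"obner basis element really is the unique element of $I_e$ of the prescribed shape (so that the linear system has a unique solution and Cramer applies), together with the elementary estimates $\val(m)\le\log_p|m|$ for nonzero integers $m$ and the Hadamard bound.
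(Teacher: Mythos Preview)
Your proof is correct and follows essentially the same route as the paper's: both arguments choose an integer basis of $I_e$ from monomial multiples of the $f_i$, express each coefficient of a reduced Gr\"obner basis element as a ratio of two integer $r\times r$ determinants (your $\det(Q_v)/\det(P)$ is the paper's $\pm\det((A_d)_J)/\det(M_d)$, up to transposition), and then bound the numerator via Hadamard and the denominator via $\val(\det P)\ge 0$, finishing with monotonicity of $d\mapsto\dim_{\mathbb Q}I_d$. The only differences are cosmetic---you work with columns where the paper uses rows, you derive the determinant identity via Cramer plus a cofactor expansion whereas the paper reads it off from a minor of $M_d^{-1}A_d$, and you supply the one-line justification (multiplication by $x_1$) for the monotonicity that the paper asserts without comment.
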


\begin{proof}
 As the
 Hilbert functions of $I$ and $\inn_{w}(I)$ agree \cite[Corollary
   2.4.9]{TropicalBook} we have that $\dim_{\mathbb Q} I_d =
 \dim_{\mathbb Z/p\mathbb Z} (\inn_{w}(I)_d)$ for all $d$.  Fix a term
 order $\prec$ on $\mathbb Z/p\mathbb Z[x_1,\dots,x_n]$.  Let $H(d) =
 \dim_{\mathbb Q}(I_d)$.

For $d\le D$, form an $H(d) \times\binom{n+d-1}{d}$ matrix $A_d$ with
columns indexed by the monomials of degree $d$ ordered so that those
in $\inn_{\prec}(\inn_{w}(I))_d$ come first.  The rows of $A_d$ are
the coefficients of polynomials forming a $\mathbb Q$-basis for $I_d$;
we may take these polynomials to be monomial multiples of the generators $f_i$, so
all entries of $A_d$ have absolute value at most $C$.

Let the submatrix of $A_d$ indexed by the first $H(d)$ columns be
denoted by $M_d$.  Note that $M_d$ has full rank; if not, since $A_d$
has rank $H(d)$, there would be a vector in the row-space of $A_d$
with its first $H(d)$ entries zero, and thus there would be a non-zero
polynomial $f$ in $I_d$ for which $\inn_{\prec}(\inn_{w}(f))$ does not
lie in $\inn_{\prec}(\inn_{w}(I))$, which is a contradiction.

Set $B_d=M_d^{-1}A_d$.  Note that the first $H(d)$ columns of $B_d$
are an identity matrix, so the minor $\det((B_d)_J)$ of $B_d$ indexed
by the set $J:=(\{1,\dots,H(d)\}\cup\{j\})\backslash \{i\}$ equals
$(-1)^{H(d)-i}(B_d)_{ij}$.  Since $(B_d)_J=M_d^{-1}(A_d)_J$,
\begin{displaymath}
\begin{array}{rcl}
  \val ((B_d)_{ij})&=&   \val (\det((B_d)_J)) \\
  & = &\val(\det(M_d^{-1}(A_d)_J))\\
  & = & -\val(\det(M_d))+\val(\det((A_d)_J)).
\end{array}
\end{displaymath}
Hadamard's inequality (see for example \cite[Corollary
  14.2.1]{Garling}) states that if $M$ is an $N \times N$ matrix with
the absolute value of the entries bounded by $C$, then $|\det(M)| \leq
C^N N^{N/2}$.  Thus $|\det((A_d)_J)| \leq C^{H(d)}H(d)^{H(d)/2}$.
Since $\det(A_d)_J$ is an integer, $\val(\det((A_d)_J) \leq
\log_p(|\det((A_d)_J)|)$.  By construction all entries of $M_d$ have
nonnegative valuation, so $\val(\det(M_d)) \geq 0$.  Thus
$$\val ((B_d)_{ij}))\le \log_p (C^{H(d)} H(d)^{H(d)/2}) = H(d)/2 \log_p (C^2H(d)).$$

By Theorem~\ref{t:degreebound} there is a Gr\"obner basis
$\{g_1,\dots,g_s\}$ for $I$ with respect to $w$ with $\deg(g_i) \leq
D$, which can be chosen so $\{ \inn_{w}(g_1),\dots,\inn_w(g_s) \}$ is
a Gr\"obner basis for $\inn_w(I)$ with respect to $\prec$.  The
construction of the matrix $B_d$ guarantees that if $g_i$ has degree $d$
then the coefficients of $g_i$ form a row of the matrix $B_d$.  This
follows from the fact that there is a unique homogeneous polynomial
$f$ in $I$ with $\inn_{\prec}(\inn_{w}(f))$ equal to a prescribed
monomial $x^u$ with the property that the coefficient of $x^u$ in $f$
is one, and no other term of $f$ lies in $\inn_{\prec}(\inn_w(I))$.
Thus the valuation of the coefficients of $g_i$ is bounded as above.
Since $H(d)$ is an increasing function of $d$, the bound is largest
when $d=D$, so $H(d)=A$, from which we see that the valuations of any
of the coefficients of any $g_i$ is bounded by $A/2 \log_p (C^2A)$ as
required.
\end{proof}

\section{Implementation Issues}
\label{s:implementation}

We focus on  $K=\mathbb{Q}$ with the
$p$-adic valuation.   This has been implemented as a
package in \texttt{Macaulay2}~\cite{GrobnerValuations}.
 As is common for Gr\"obner algorithms with coefficients in $\mathbb
 Q$, a major issue in practical implementations is coefficient
 blow-up.  We found examples where coefficients became so large that
 computations would not terminate within the memory space limitations.
 Thus it was necessary to consider ways to improve the speed and
 efficiency of the algorithms, the two main ways of which are:

\begin{enumerate}
 \item Using criteria to decide a priori that certain $S$-polynomials reduce to zero;
 \item Working over $\mathbb{Z}/p^m\mathbb{Z}$ for some suitably large $m\in\mathbb{N}$.
\end{enumerate}

\subsection{Choice of S-Polynomials - Buchberger's Criteria} \label{s:Buchbergercriteria}

Suppose we are at some intermediate stage of the Buchberger Algorithm
where we have a set $\mathcal P$ of critical pairs still to consider and we
are about to compute the $S$-polynomial of the pair $(f_i,f_j)$.  Then

\begin{description} 
 \item[B1\label{B1}] holds if $\lcm(\lm(f_i),\lm(f_j))=\lm(f_i)\lm(f_j)$; 
 \item[B2\label{B2}] holds if there exists some $k\ne i,j$ such that the pairs $(f_i,f_k)$ and $(f_j,f_k)$ are not in $\mathcal P$ and $\lm(f_k)$ divides $\lcm(\lm(f_i),\lm(f_j))$. 
\end{description}

From the work of Buchberger~\cite{Buchberger} for usual Gr\"obner
bases, if either of these conditions hold then we know a priori that
the $S$-polynomial reduces to zero. The proof can be found for example
in~\cite{CLO}: the proof for \ref{B1} is Proposition 4, and the proof
for \ref{B2} is Proposition 10 of \cite[\S 2.9]{CLO}.  The first proof
follows through verbatim in this situation, while the second requires
the same modifications as in the proof of
Algorithm~\ref{a:Buchberger}.  We illustrate the usefulness of the
criteria with an example.

\begin{example} \label{e:BuchbergerCriterionExample}
Let $K=\mathbb{Q}$ with the $2$-adic valuation and let $S$ be the
polynomial ring $\mathbb{Q}[x_1,\dots,x_9]$. Let $I$ be the ideal
generated by polynomials $\{- 3x_1x_4 + 6x_3x_4 + 3x_1x_5 + 92x_2x_5 +
2x_3x_5 - 23x_2x_6 - 2x_3x_6, x_1x_8 + 7x_2x_8 - 4x_3x_8 - 6x_1x_9 -
3x_2x_9, x_4x_8 + 3x_5x_8 - 3x_6x_8 - 24x_5x_9 - 3x_6x_9, - x_2x_4 -
4x_3x_4 + x_2x_5 + 4x_3x_5 + 23x_2x_6 + 2x_3x_6, - 13x_1x_7 - 4x_3x_7
+ 7x_2x_8 + 28x_3x_8 - 65x_1x_9 - 3x_2x_9 - 32x_3x_9, x_4x_7 +
27x_5x_7 - 9x_6x_8 + 5x_4x_9 + 135x_5x_9 - 9x_6x_9, - 4x_2x_5 -
16x_3x_5 + 3x_1x_6 + x_2x_6 - 2x_3x_6, 13x_2x_7 - 8x_3x_7 + x_2x_8 +
4x_3x_8 + 59x_2x_9 - 64x_3x_9, 8x_5x_7 + x_6x_7 - 3x_6x_8 + 40x_5x_9 +
5x_6x_9, 4x_2x_5x_8 + 16x_3x_5x_8 + 20x_2x_6x_8 - 10x_3x_6x_8 -
24x_2x_5x_9 - 96x_3x_5x_9 - 3x_2x_6x_9 - 12x_3x_6x_9\}$.  This is the
general fiber of a Mustafin variety in the sense of
\cite{MustafinVarieties}.  Its special fiber is the initial ideal with
respect to $w=0$.  

At some intermediate step of the Buchberger Algorithm (Algorithm 
\ref{a:Buchberger}) we compute the normal form of the $S$-polynomial
$6x_3x_4x_6x_7 + 3x_1x_5 x_6x_7 + 24x_1x_4 x_5x_7+ 92x_2x_5x_6x_7 +
2x_3x_5x_6x_7 - 23x_2x_6^2 x_7 - 2x_3x_6^2 x_7 - 9x_1x_4x_6x_8 +
120x_1x_4x_5x_9 + 15x_1x_4x_6x_9$ of the polynomials $- 3x_1x_4 +
6x_3x_4 + 3x_1x_5 + 92x_2x_5 + 2x_3x_5 - 23x_2x_6 - 2x_3x_6$ and
$x_6x_7 +8x_5x_7 - 3x_6x_8 + 40x_5x_9 + 5x_6x_9$. Notice that the
condition B1 holds, so we know a priori that this $S$-polynomial will
reduce to zero, however when we try to compute the normal form, after
a few divisions we obtain a leading coefficient of $1.02624\dots
\times 10^{37,746}$ and after a few more divisions we have exceeded
the memory capabilities of the computer.

 By implementing Buchberger's Criterion, the algorithm no longer considers this critical pair and we compute the Gr\"{o}bner basis to be 
$
\{ 3x_1x_4 - 6x_3x_4 - 3x_1x_5 - 92x_2x_5 - 2x_3x_5 + 23x_2x_6 + 2x_3x_6, 
x_1x_8 + 7x_2x_8 - 4x_3x_8 - 6x_1x_9 - 3x_2x_9, 
x_4x_8 + 3x_5x_8 - 3x_6x_8 - 24x_5x_9 - 3x_6x_9, 
x_2x_4 + 4x_3x_4 - x_2x_5 - 4x_3x_5 - 23x_2x_6 - 2x_3x_6, 
13x_1x_7 + 4x_3x_7 - 7x_2x_8 - 28x_3x_8 + 65x_1x_9 + 3x_2x_9 + 32x_3x_9, 
x_4x_7 + 27x_5x_7 - 9x_6x_8 + 5x_4x_9 + 135x_5x_9 - 9x_6x_9, 
- 4x_2x_5 - 16x_3x_5 + 3x_1x_6 + x_2x_6 - 2x_3x_6, 
13x_2x_7 - 8x_3x_7 + x_2x_8 + 4x_3x_8 + 59x_2x_9 - 64x_3x_9, 
8x_5x_7  - 3x_6x_8 + 40x_5x_9 + 5x_6x_9+ x_6x_7, 
 - 4x_2x_5x_8 - 16x_3x_5x_8 - 20x_2x_6x_8 + 10x_3x_6x_8 + 24x_2x_5x_9  + 3x_2x_6x_9 + 96x_3x_5x_9+ 12x_3x_6x_9  \}
$
\end{example}

\subsection{Working over $\mathbb{Z}/p^m\mathbb{Z}$} \label{ss:Zmodpm}

While it is sometimes unavoidable to get large coefficients when
computing a Gr\"obner basis over $\mathbb Q$, these coefficients do
not always have large $p$-adic valuation.  This motivates working in
$\mathbb Z/p^m\mathbb Z$ via the method suggested in
Remark~\ref{r:Zmodpm}.

This requires the following standard subroutine, which details how to
compute a Gr\"obner basis for $I$ given generators for
$\inn_{\prec}(\inn_w(I))$.  This is the usual linear algebra for
reconstructing Gr\"obner bases as in the non-valuation case; we
include it for completeness.

\begin{algorithm} \label{a:liftingalgo}

\noindent {\bf Input: } Homogeneous generators $\{f_1,\dots,f_l \}$
for an ideal $I \subseteq \mathbb Q[x_1,\dots,x_n]$.  A weight vector $w \in
\mathbb R^n$ and a term order $\prec$.  Generators
$\mathcal I = \{x^{u_1},\dots,x^{u_s} \}$ for $\inn_{\prec}(\inn_w(I))$.

\noindent {\bf Output: } A reduced Gr\"obner basis for $I$ with
respect to $w$ and $\prec$.

\begin{enumerate}

\item $\mathcal G = \emptyset$.

\item For each degree $d$ of a monomial $x^{u_i} \in \mathcal I$ do:

\begin{enumerate}
\item Let $h=\dim_{\mathbb Q} I_d$.  Form the $h \times {n+d-1 \choose d}$
  matrix $A_d$ whose rows are the coefficients of a $\mathbb Q$-basis for
  $I_d$.  The columns of $A_d$ are indexed by the monomials of degree
  $d$, and we assume that the monomials in $\inn_{\prec}(\inn_w(I))_d$
  come first in the ordering.  The rows can be taken to be monomial
  multiples of the $f_i$.
\item Let $B_d$ be the result of multiplying $A_d$ by the inverse of
  the first $h \times h$ submatrix of $A_d$.  This submatrix is
  invertible by the argument of the proof of
  Proposition~\ref{proposition.complexity.valbound}.
\item For each $x^{u_i} \in \mathcal I$ of degree $d$, let $g_i$ be the
  polynomial corresponding to the row of $B_d$ that contains a $1$ in
  the column corresponding to $x^{u_i}$.  Add $g_i$ to $\mathcal G$.
\end{enumerate}

\item Output $\mathcal G$.

\end{enumerate}

\end{algorithm}

\begin{proof}[Proof of correctness of algorithm~\ref{a:liftingalgo}]
The chosen polynomials have the property that no monomial other than
$x^{u_i}$ lies in $\inn_{\prec}(\inn_w(I))$, so
$\inn_{\prec}(\inn_w(g_i))=x^{u_i}$.  Thus the initial ideal $\inn_{\prec}(\inn_w(I))$ equals $\langle \inn_{\prec}(\inn_w(g_1)),\dots,\inn_{\prec}(\inn_w(g_r))
\rangle$, so the output is a reduced Gr\"obner basis as required.
\end{proof} 

We incorporate this into the following algorithm, which computes a
Gr\"obner basis modulo $p^m$ for large $m$.

\begin{algorithm} \label{a:modpm}
\hspace{1cm}\\ \noindent {\bf Input: } A list $\{f_1,\dots,f_l \}$ of homogeneous
polynomials in $\mathbb Q[x_1,\dots,x_n]$, a prime $p$, a
weight-vector $w \in \mathbb R^n$, and a term order $\prec$.

\noindent {\bf Output: } A Gr\"obner basis for $\langle f_1,\dots,f_l \rangle$.  

\begin{enumerate}

\item Let $I = \langle f_1,\dots,f_l \rangle$.  Let $f_i^w =
  f_i(p^{w_1}x_1,\dots,p^{w_n}x_n)$ for $1 \leq i \leq l$.  Clear
  denominators in the $f_i^w$, and saturate the resulting ideal in
  $\mathbb Z[x_1,\dots,x_n]$ by $\langle p \rangle$.  Let
  $\tilde{I}_w$ be the image of this ideal in $\mathbb Z/p^m \mathbb
  Z[x_1,\dots,x_n]$.

\item Compute $\inn_{\prec}(\inn_0(\tilde{I}_w))$ using
  Algorithm~\ref{a:Buchberger}. 

\item Lift the resulting initial ideal to a Gr\"obner basis for $I$
  using Algorithm~\ref{a:liftingalgo}. \label{i:lift}

\end{enumerate}

\end{algorithm}

Note that the fact that Algorithm~\ref{a:Buchberger} does compute
$\inn_{\prec}(\inn_0(\tilde{I}_w))$ follows from Remark~\ref{r:Zmodpm}.
The following lemma shows that for $m$ sufficiently large this initial
ideal equals $\inn_{\prec}(\inn_w(I))$, so
Algorithm~\ref{a:liftingalgo} will terminate with the correct answer.

\begin{lemma}\label{prop.modpm.initialidealsequal}
For $m \gg 0$  Algorithm~\ref{a:modpm} terminates with the correct answer.
\end{lemma}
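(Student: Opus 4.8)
The plan is to show that the finitely many "obstructions" to $\inn_{\prec}(\inn_0(\tilde I_w)) = \inn_{\prec}(\inn_w(I))$ each disappear once $m$ is large enough, and then invoke Algorithm~\ref{a:liftingalgo}, whose correctness is already established, to finish. The first step is to observe that Step~(1) of Algorithm~\ref{a:modpm} performs, over $\mathbb Q$, exactly the substitution $x_i \mapsto p^{w_i}x_i$ together with clearing denominators and saturating by $\langle p\rangle$; over $\mathbb Q$ (equivalently over $\mathbb Z_{(p)}$) this is an isomorphism of ideals, and it converts the weighted initial ideal $\inn_w(I)$ into the unweighted one $\inn_0$ of the transformed ideal. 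So it suffices to prove the statement for $w = 0$, replacing $I$ by the saturated integral ideal $\tilde I := \tilde I_0 \subseteq \mathbb Z[x_1,\dots,x_n]$, whose image in $\mathbb Z/p^m\mathbb Z[x_1,\dots,x_n]$ is $\tilde I_w$.

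Next I would run Algorithm~\ref{a:Buchberger} once, symbolically, over $\mathbb Z/p^m\mathbb Z$ and compare it step-by-step with the run over $\mathbb Z_{(p)}$ (equivalently $\mathbb Q$). By Theorem~\ref{t:degreebound} and its proof, all polynomials that arise have degree at most $D = 2(\delta^2/2+\delta)^{2^{n-2}}$, so at every stage the algorithm manipulates coefficients lying in the finitely many graded pieces $S_d$ for $d \leq D$, each finite-dimensional. Over $\mathbb Q$ the algorithm terminates (Algorithm~\ref{a:Buchberger} is correct), and in doing so it uses only finitely many arithmetic operations; each such operation — computing an $S$-polynomial, choosing $g$ with $E(q_j,g)$ minimal, forming $c_v = \lc(q_j)/\lc(x^vg)$, and in Step~\ref{i:v} dividing by $1-c_v$ — involves inverting finitely many elements of $\mathbb Q$ whose $p$-adic valuations are bounded; as shown in the proof of correctness of Algorithm~\ref{a:normalForm}, the crucial denominator $1-c_v$ always has valuation $0$. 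Choose $m$ larger than: the maximum valuation of any coefficient appearing in this finite computation, the maximum valuation of any coefficient of a reduced Gr\"obner basis element (bounded by Proposition~\ref{proposition.complexity.valbound}), and the maximum valuation occurring among the pivots in the linear algebra of Algorithm~\ref{a:liftingalgo}. For this $m$, every reduction modulo $p^m$ of the rational computation is faithful: an element of valuation $< m$ is nonzero in $\mathbb Z/p^m\mathbb Z$, a unit stays a unit, and the partial order $<$ of Definition~\ref{d:totalorder} is computed from valuations that are all $< m$, hence correctly recovered in $\mathbb Z/p^m\mathbb Z$ by Remark~\ref{r:Zmodpm}. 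Therefore the $\mathbb Z/p^m\mathbb Z$-run of Algorithm~\ref{a:Buchberger} produces a set of polynomials whose leading monomials generate the same monomial ideal, i.e. $\inn_{\prec}(\inn_0(\tilde I_w)) = \inn_{\prec}(\inn_0(\tilde I)) = \inn_{\prec}(\inn_w(I))$.

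Finally, feeding these correct generators $\mathcal I = \{x^{u_1},\dots,x^{u_s}\}$ into Algorithm~\ref{a:liftingalgo} terminates with a reduced Gr\"obner basis for $I$ by the correctness proof of that algorithm — provided the linear algebra step can be carried out over $\mathbb Z/p^m\mathbb Z$, i.e. the relevant $h\times h$ submatrices $M_d$ are invertible there. They are invertible over $\mathbb Q$ by the argument in the proof of Proposition~\ref{proposition.complexity.valbound}, and since $\val(\det M_d) \geq 0$ is $< m$ (it is bounded by the same Hadamard estimate), $\det M_d$ is a unit in $\mathbb Z/p^m\mathbb Z$, so $M_d^{-1}$ exists there and reduces the rational $M_d^{-1}$ correctly. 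Hence Algorithm~\ref{a:modpm} outputs a correct Gr\"obner basis.

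The main obstacle is making the phrase "reduction modulo $p^m$ is faithful" genuinely airtight: one must argue that every branch point of the algorithm (which $g$ minimizes $E$, whether $\lm(g) \mid \lm(q_j)$, whether $q_j = 0$, the ordering $<$) is decided by data of bounded valuation, so that the $\mathbb Z/p^m\mathbb Z$-run follows the very same branches as the $\mathbb Q$-run; this is where one leans on the degree bound (to bound the size of the computation), on Proposition~\ref{proposition.complexity.valbound} (to bound output valuations), and on the valuation-$0$ property of $1-c_v$ (so no spurious zero divisors appear). Once the two runs are shown to be in lockstep, the equality of initial ideals and the correctness of the lift are formal.
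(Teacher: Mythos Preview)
Your strategy differs from the paper's, and the lockstep argument as written has a gap.

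The paper does not compare the $\mathbb Q$-run and the $\mathbb Z/p^m\mathbb Z$-run of Buchberger step by step. It proves $\inn_{\prec}(\inn_0(\tilde I_w))=\inn_{\prec}(\inn_w(I))$ by two short inclusions: for one direction, Proposition~\ref{proposition.complexity.valbound} furnishes a reduced Gr\"obner basis of $I_w$ whose coefficients all have valuation below an explicit bound, and for $m$ above that bound the images of these polynomials in $\mathbb Z/p^m\mathbb Z[x_1,\dots,x_n]$ keep the same initial terms, giving $\inn_{\prec}(\inn_w(I)) \subseteq \inn_{\prec}(\inn_0(\tilde I_w))$; for the reverse inclusion, any element of $\tilde I_w$ lifts by construction to an element of $I_w$ with the same initial form. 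This argument never looks inside Algorithm~\ref{a:normalForm}.

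Your lockstep argument requires every arithmetic step over $\mathbb Z/p^m\mathbb Z$ to mirror the corresponding step over $\mathbb Q$. You correctly note that $1-c_v$ is a unit, but you do not address the quotient $c_v=\lc(q_j)/\lc(g)$ itself. When the chosen $g$ is a previously stored intermediate $q_m\in T$, one can have $\val_p(\lc(q_m))>0$ (for instance $q_3=-16yz$, with $\val_2(\lc(q_3))=4$, in the worked example following Algorithm~\ref{a:normalForm}); then $\lc(q_m)$ is a zero-divisor in $\mathbb Z/p^m\mathbb Z$, the congruence $c\cdot\lc(q_m)\equiv\lc(q_j)\pmod{p^m}$ has many solutions, and enlarging $m$ does not remove the ambiguity. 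Saying ``the valuations are bounded'' is not enough here: you need the divisor to have valuation exactly zero. Without specifying how the $\mathbb Z/p^m\mathbb Z$-run resolves this you cannot conclude the two runs stay in lockstep. The paper's proof sidesteps the issue by comparing the ideals rather than the computations.

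A smaller point: Step~\ref{i:lift} of Algorithm~\ref{a:modpm} invokes Algorithm~\ref{a:liftingalgo} over $\mathbb Q$, with the original $f_i\in\mathbb Q[x_1,\dots,x_n]$ as input and only the monomial list $\{x^{u_i}\}$ imported from Step~(2). Your discussion of $\det M_d$ being a unit modulo $p^m$ is therefore unnecessary; once the monomial ideal is shown to be correct, the lift over $\mathbb Q$ is automatic.
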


\begin{proof}
We first show that for $m \gg 0$ we have
$\inn_{\prec}(\inn_0(\tilde{I}_w)) = \inn_{\prec}(\inn_w(I))$.  Note
that if $f = \sum c_u x^u$ with $c_u \in \mathbb Z$ with $\val(c_u) <
m$, then the image $\tilde{f}$ of $f$ in $\mathbb Z/p^m \mathbb
Z[x_1,\dots,x_n]$ satisfies $\inn_{\prec}(\inn_0(\tilde{f}))=
\inn_{\prec}(\inn_0(f))$.  Let $I_w = \langle f_i^w : 1 \leq i \leq l \rangle \subseteq 
\mathbb Q[x_1,\dots,x_n]$, so $\inn_w(I) = \inn_0(I_w)$.  By
Proposition~\ref{proposition.complexity.valbound} there is a bound in
terms of the absolute value of the coefficients of the generators of
$I$ on the maximum valuation that occurs in a reduced Gr\"obner basis.
For $m$ larger than this bound we have $\inn_{\prec}(\inn_w(I))
\subseteq \inn_{\prec}(\inn_0(\tilde{I}_w))$.

For the reverse inclusion, fix $x^u \in
\inn_{\prec}(\inn_0(\tilde{I}_w))$.  Choose $f \in \tilde{I}_w$ with
$\inn_{\prec}(\inn_w(f))=x^u$.  By the definition of $\tilde{I}_w$
there is $g \in I_w$ with $f =\tilde{g}$.  By construction
$\inn_0(g)=\inn_0(f)$, so $x^u=\inn_{\prec}(\inn_0(g)) \in
\inn_{\prec}(\inn_0(I_w)) = \inn_{\prec}(\inn_w(I))$.

In the first step of the algorithm, note that generators of the ideal
obtained by clearing denominators and saturating by $\langle p
\rangle$ generate $I_w \cap \mathbb Z_{\langle p \rangle}
       [x_1,\dots,x_n]$.  Since the image of an ideal $J \subset
       \mathbb Z[x_1,\dots,x_n]$ in $\mathbb Z/p^m \mathbb Z [x_1,\dots,x_n]$
       equals the ideal obtained by first taking the image of $J$ in
       $\mathbb Z_{\langle p \rangle}[x_1,\dots,x_n]$ and then taking
       the image in $\mathbb Z/p^m \mathbb Z[x_1,\dots,x_n]$ (using
       that $\mathbb Z_{\langle p \rangle}/\langle p^m \rangle \cong
       \mathbb Z/p^m \mathbb Z$), $\tilde{I}_w$ is the image of $I_w
       \cap \mathbb Z[x_1,\dots,x_n]$ in $\mathbb Z/p^m\mathbb
       Z[x_1,\dots,x_n]$.  The second step computes
       $\inn_{\prec}(\inn_0(\tilde{I}_w))$ by Remark~\ref{r:Zmodpm}.
       The equality $\inn_{\prec}(\inn_0(\tilde{I}_w)) =
       \inn_{\prec}(\inn_w(I))$ then guarantees that we have the
       correct input for Algorithm~\ref{a:liftingalgo}, so the
       algorithm terminates correctly.
\end{proof}

The bound on $m$ to guarantee that we are in the situation given in
Proposition~\ref{proposition.complexity.valbound}, may be ridiculously
large, and not tight.  If instead one uses an ad hoc choice for $m$,
step~\ref{i:lift} of Algorithm~\ref{a:modpm} will fail if the bound
chosen was too low.  We can thus iterate, repeating the computation
with a larger value of $m$.  This seems often to be the best choice in
practice.

\section{Cardinality}
\label{s:cardinality}

In this  section we give an example which shows that a $p$-adic
Gr\"obner basis may be significantly smaller than any standard
Gr\"obner basis.  This gives another motivation to study such
Gr\"obner bases.

Recall that a monomial ideal $M$ is strongly stable, or Borel fixed,
if for all $x^u \in M$ with $u_j>0$ and $i<j$ we have $x_i/x_j x^u \in
M$.  Our construction requires a special case of the following
elementary lemma.

\begin{lemma} \label{l:Borelfixed}
Fix degrees $d_1,\dots,d_l$, and let $\mathbb P = \prod_{i=1}^l
\mathbb P^{{d_i+n-1 \choose d_i}-1}$ be the parameter space for
sequences of homogeneous polynomials $f_1,\dots,f_l \subset
K[x_1,\dots,x_n]$ of degrees $d_1,\dots,d_l$, where $K$ has
characteristic zero.  Then there is a Zariski-open set $U \subseteq
\mathbb P$ for which if $p \in U$ then the ideal $I= \langle
f_1,\dots,f_l \rangle$ generated by the polynomials corresponding to
$p$ has the property that $\inn_{\prec}(I)$ is strongly stable for all
term orders $\prec$.  There are points in $U$ with any prescribed
valuations.
\end{lemma}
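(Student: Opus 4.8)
This statement is classical apart from the last sentence, so the plan is to identify, for a generic $p\in\mathbb P$, the initial ideal $\inn_\prec(I)$ with a generic initial ideal and invoke Galligo's theorem, and to arrange the open set $U$ so that it works for all (infinitely many) term orders at once by passing through a universal Gr\"obner basis. First I would introduce the \emph{generic ideal}. Let $F=K(a_{i,u}:1\le i\le l,\ |u|=d_i)$ be the field of rational functions in indeterminate coefficients, set $F_i=\sum_{|u|=d_i}a_{i,u}x^u$, and let $I_{\mathrm{gen}}=\langle F_1,\dots,F_l\rangle\subseteq F[x_1,\dots,x_n]$. The key claim is that $\inn_\prec(I_{\mathrm{gen}})$ is strongly stable for every term order $\prec$.

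To prove the claim, observe that applying $g\in\GL_n$ to $I_{\mathrm{gen}}$ replaces each coefficient vector $(a_{i,u})_{|u|=d_i}$ by its image under an invertible linear map (the action of $g$ on degree-$d_i$ forms). Hence if $g$ is a matrix of $n^2$ further indeterminates, then, after an invertible renaming of the coefficient indeterminates, $g\cdot I_{\mathrm{gen}}$ is again the generic ideal, now over $F(g)$. Forming $\inn_\prec$ commutes with field extension (a Gr\"obner basis remains a Gr\"obner basis) and a monomial ideal is determined by its monomials independently of the base field, so $\inn_\prec(g\cdot I_{\mathrm{gen}})$ and $\inn_\prec(I_{\mathrm{gen}})$ have the same monomial generators. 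On the other hand, since $g$ is generic, $\inn_\prec(g\cdot I_{\mathrm{gen}})=\gin_\prec(I_{\mathrm{gen}})$ by Galligo's theorem, and the generic initial ideal of any ideal is Borel fixed, hence strongly stable because $K$ has characteristic zero. This proves the claim.

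Next I would descend to $\mathbb P$. Let $G\subset I_{\mathrm{gen}}$ be a universal Gr\"obner basis, i.e.\ a finite set that is a Gr\"obner basis with respect to every term order (it exists since $I_{\mathrm{gen}}$ has only finitely many distinct reduced Gr\"obner bases); after clearing a common denominator $q(a)$ we may regard the elements of $G$ as polynomials whose coefficients are polynomials in the $a_{i,u}$. Let $U\subseteq\mathbb P$ be the locus where $q$ does not vanish, where no coefficient of any element of $G$ vanishes, and where $\dim_K(I_p)_d$ attains its generic value $\dim_F(I_{\mathrm{gen}})_d$ for all $d$ (only finitely many $d$ are relevant, and this is an open condition since $\dim_K(I_p)_d$ is the rank of a matrix of polynomials in $p$, hence lower semicontinuous). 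Then $U$ is a nonempty Zariski-open set. For $p\in U$, the specialization $G_p$ of $G$ has the same monomial support as $G$, so $\inn_\prec(g_p)=\inn_\prec(g)$ for every $g\in G$ and every $\prec$. Fixing $\prec$, we then have $\langle G_p\rangle\subseteq I_p$ and $\inn_\prec\langle G_p\rangle\supseteq\langle\inn_\prec(g_p):g\in G\rangle=\inn_\prec(I_{\mathrm{gen}})$; comparing dimensions of graded pieces (the number of standard monomials of $\langle G_p\rangle$ can only be at most that of $\inn_\prec(I_{\mathrm{gen}})$, while $\dim_K(I_p)_d=\dim_F(I_{\mathrm{gen}})_d$) forces $\langle G_p\rangle=I_p$ and $\inn_\prec(I_p)=\inn_\prec(I_{\mathrm{gen}})$, which is strongly stable by the previous paragraph.

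For the final sentence, note that $U$ contains the complement of the vanishing locus of a single nonzero polynomial $P(a_{i,u})$. Given target valuations $\gamma_{i,u}\in\Gamma$, write $a_{i,u}=t^{\gamma_{i,u}}b_{i,u}$; then $P(t^{\gamma}b)$ is a nonzero polynomial in the $b_{i,u}$, and since the units of the valuation ring $R$ form an infinite set one can choose each $b_{i,u}$ of valuation $0$ with $P(t^{\gamma}b)\ne0$, yielding a point of $U$ whose coefficients have the prescribed valuations. I expect the main obstacle to be making the first two paragraphs precise: formalizing the slogan ``a generic linear change of coordinates carries the generic ideal to the generic ideal'' and its interaction with Galligo's theorem over a function field; after that, the only remaining subtlety is producing a \emph{single} open set $U$ valid for all term orders simultaneously, which is exactly what the universal Gr\"obner basis achieves.
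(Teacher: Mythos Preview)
Your argument is correct and reaches the same conclusion, but the route differs from the paper's in two places.  For a fixed term order, the paper works on $G\times\mathbb P$ with $G=\mathrm{PGL}(n,K)$: comprehensive Gr\"obner bases give an open $V\subset G\times\mathbb P$ on which $\inn_\prec(gI)$ is a constant monomial ideal $M_\prec$, and comparing with the gin of a single fibre shows $M_\prec$ is strongly stable; the slice $U_\prec=\{p:(\id,p)\in V\}$ is then nonempty by $G$-equivariance.  You instead pass to the generic ideal $I_{\mathrm{gen}}$ over the function field and identify $g\cdot I_{\mathrm{gen}}$ (for $g$ a matrix of new indeterminates) with $I_{\mathrm{gen}}$ itself via the invertible linear action on forms, so $\inn_\prec(I_{\mathrm{gen}})=\gin_\prec(I_{\mathrm{gen}})$ directly.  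This avoids invoking comprehensive Gr\"obner bases.  For uniformity over all term orders, the paper counts: there are only finitely many strongly stable ideals with the given Hilbert function, hence only finitely many $U_\prec$ to intersect.  You instead take a universal Gr\"obner basis of $I_{\mathrm{gen}}$ and cut out $U$ by the nonvanishing of its coefficients (plus matching graded dimensions), which handles all $\prec$ in one stroke and sidesteps the finiteness count.  Your semicontinuity argument in fact makes the separate dimension conditions redundant: once $G_p\subset I_p$ with $\langle\inn_\prec(g_p)\rangle=\inn_\prec(I_{\mathrm{gen}})$, the inclusion $\inn_\prec(I_{\mathrm{gen}})\subseteq\inn_\prec(I_p)$ together with $\dim(I_p)_d\le\dim(I_{\mathrm{gen}})_d$ forces equality in every degree.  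For the last sentence both proofs amount to avoiding a single hypersurface with prescribed valuations; the paper spells out the induction on the number of variables that underlies your one-line appeal to the infinitude of $R^*$.
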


\begin{proof}
Fix a term order $\prec$.  Note that $G = \mathrm{PGL}(n,K)$ acts on
$\mathbb P$ by change of coordinates on each factor.  There is a
nonempty open set $V \subset G \times \mathbb P$ for which
$\inn_{\prec}(gI)$ is constant for all $(g,p) \in V$.  Denote this
  initial ideal by $M_{\prec}$.  The existence of this open set $V$
  follows from the theory of comprehensive Gr\"obner
  bases~\cite{Weispfenning}.  For a fixed $p \in \mathbb P$, there is
  an open set $V' \subset G$ for which the initial ideal
  $\inn_{\prec}(gI)$ equals the generic initial ideal
  $\gin_{\prec}(I)$, which is strongly stable; see for
  example~\cite[Theorem 15.23]{Eisenbud}.  By considering any $p \in
  \mathbb P$ for which there is some $g \in G$ with $(g, p) \in V$, we
  see that the initial ideal $M_{\prec}$ is strongly stable.

Since $V$ is open in $G \times \mathbb P$, the set $U_{\prec}=\{
p \in \mathbb P : (\id,p) \in V \}$ is open in $\mathbb P$,
and $\inn_{\prec}(I) = M_{\prec}$ for all $p \in U_{\prec}$.
The group $G$ acts on $G \times \mathbb P$ by $h \cdot (g, p) =
(gh^{-1}, hp) $.  Note that the set $V \subset
G \times \mathbb P$ is invariant under this action.  This means that
the set $U_{\prec}$ is nonempty, as given any $(g,p) \in V$, we also have $(\id,g^{-1}p) \in V$.    If $M_{\prec} = M_{\prec'}$ for two
different term orders $\prec, \prec'$, then we can take $U_{\prec} =
U_{\prec'}$, as the two term orders agree on the initial terms of a
reduced Gr\"obner basis of any $I=I(p)$ with $p \in U_{\prec}$.
The first part of the lemma then follows from the observation that the
Hilbert functions of all initial ideals $M_{\prec}$ agree and there
are only a finite number of strongly stable ideals with a given
Hilbert function, so there are only a finite number of open sets
$U_{\prec}$ to intersect to obtain an open set $U \subset \mathbb P$ with
$\inn_{\prec}(I)$ strongly stable for any $p \in U$ and any
term order $\prec$.

Since $U \subset \mathbb P$ is open, so is its intersection with an
affine chart $\mathbb A^{\sum_{i=1}^l {d_i+n-1 \choose d_i}-l}$.  This open set 
contains the complement of a hypersurface $V(f)$ where $f \in
K[x_1,\dots,x_N]$ for $N= \sum_{i=1}^l {d_i+n-1 \choose d_i}-l$.  We now
show by induction on $N$ that the valuations of a point outside $V(f)$
can be prescribed.  When $N=1$, $V(f)$ is a finite set, so the base
case follows from the fact that there are infinitely many elements of
$K$ with a given valuation.  Now assume that the claim is true for
smaller $N$, and write $f =gx_1^m+ $ lower order terms, where $g \in
K[x_2,\dots,x_N]$.  Then by induction there is $x'=(x_2,\dots,x_N)$
with $g(x') \neq 0$ and with $\val(x')$ prescribed.  By the base case
there is $x_1$ with prescribed valuation for which the univariate
polynomial $f(x_1,x')$ is nonzero.  Then $(x_1,x') \in U$ is the
desired point.
\end{proof}

The other ingredient needed for the construction is the notion of a
Stanley decomposition for a monomial ideal $M \subseteq
K[x_1,\dots,x_n]$.  For $\sigma \subseteq \{1,\dots,n \}$ and a
monomial $x^u$ we denote by $(x^u,\sigma)$ the set of monomials
$\{x^{u+v} : v_i=0 \text{ for } i \not \in \sigma \}$.  A Stanley
decomposition for $M$ is a union $\{ (x^{u_i},\sigma_i) : 1 \leq i
\leq s\}$ such that every monomial in $M$ lies in a unique set
$(x^{u_i},\sigma_i)$.  The key fact about Stanley decompositions is
that the Hilbert function $\dim_{K} I_t$ of $I$ is the sum
$\sum_{i=1}^s {t-|u_i|+|\sigma_i|-1 \choose |\sigma_i|}$.

\begin{theorem}
Fix an even integer $d=2e$.  Let $I=\langle f,g \rangle \subseteq
\mathbb Q[x_1,x_2,x_3]$ be two generic polynomials of degree $d$ where
every coefficient of $f$ except $x_1^d$ and every coefficient of $g$
except $x_2^{e}x_3^{e}$ has positive $2$-adic valuation, and the
remaining two coefficients have valuation zero.  Then $\inn_0(I) =
\langle x_1^d,x_2^{e}x_3^{e} \rangle$ with the $2$-adic valuation, but
any standard initial ideal $\inn_{\prec}(I)$ has at least $1/2(d+3)$
generators.
\end{theorem}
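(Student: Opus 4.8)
The plan is to prove the two assertions separately.

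For the computation of $\inn_0(I)$: the hypotheses on valuations give $\inn_0(f)=\overline{a}\,x_1^d$ and $\inn_0(g)=\overline{b}\,x_2^{e}x_3^{e}$, where $a$ (resp.\ $b$) is the coefficient of $x_1^d$ in $f$ (resp.\ of $x_2^ex_3^e$ in $g$), which has valuation $0$ by hypothesis; hence $\langle x_1^d,x_2^ex_3^e\rangle\subseteq\inn_0(I)$. Since $x_1^d$ and $x_2^ex_3^e$ are coprime, Buchberger's criterion \ref{B1} (which is valid here; see Section~\ref{s:Buchbergercriteria}) shows that the single $S$-polynomial $S(f,g)$ has normal form $0$, so $\{f,g\}$ is already a Gr\"obner basis of $I$ with respect to $w=0$, whence $\inn_0(I)=\langle\inn_0(f),\inn_0(g)\rangle=\langle x_1^d,x_2^ex_3^e\rangle$. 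As $I$ and $\inn_0(I)$ have the same Hilbert function \cite[Corollary 2.4.7]{TropicalBook}, and $\langle x_1^d,x_2^ex_3^e\rangle$ is a complete intersection of two forms of degree $d$, the Hilbert series of $\mathbb Q[x_1,x_2,x_3]/I$ is $(1-t^d)^2/(1-t)^3$.

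For the lower bound, fix a term order $\prec$ and set $M:=\inn_{\prec}(I)$. Using Lemma~\ref{l:Borelfixed}, choose $f,g$ generic with the prescribed valuations (this is compatible with the hypotheses above, which constrain only valuations of coefficients: one works in the affine chart of $\mathbb P$ where the coefficients of $x_1^d$ in $f$ and of $x_2^ex_3^e$ in $g$ are $1$), so that $M$ is strongly stable. Ordinary Gr\"obner bases preserve Hilbert functions, so $\dim_{\mathbb Q}M_t=\binom{t+2}{2}-h(t)$ with $h$ the Hilbert function of $\mathbb Q[x_1,x_2,x_3]/I$ found above; thus for $t\gg0$,
\[
\dim_{\mathbb Q}M_t=2\binom{t-d+2}{2}-\binom{t-2d+2}{2}=\tfrac12t^2+\tfrac32t+(1-d^2).
\]
Next, apply the Eliahou--Kervaire (Stanley) decomposition of the strongly stable ideal $M$: writing $m(x^u):=\max\{i:u_i>0\}$, the sets $(x^u,\{m(x^u),m(x^u)+1,\dots,n\})$, with $x^u$ ranging over the minimal generators $G(M)$, partition the monomials of $M$, so for $t\gg0$
\[
\dim_{\mathbb Q}M_t=\sum_{x^u\in G(M)}\binom{t-|u|+n-m(x^u)}{\,n-m(x^u)\,}.
\]
Here $n=3$, so a minimal generator $x^u$ contributes $\binom{t-|u|+2}{2}$ when $m(x^u)=1$ (i.e.\ $x^u$ is a power of $x_1$), $t-|u|+1$ when $m(x^u)=2$, and $1$ when $m(x^u)=3$. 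Matching the coefficient of $t^2$ forces exactly one minimal generator with $m(x^u)=1$; since $M_t=0$ for $t<d$ while $\dim_{\mathbb Q}M_d=2$ forces $x_1^d\in M$ by strong stability, that generator is $x_1^d$, contributing $\tfrac{3-2d}{2}$ to the coefficient of $t$. Matching the coefficient of $t$ then gives exactly $d$ minimal generators with $m(x^u)=2$. Hence $|G(M)|\ge 1+d$, which is in particular $\ge\tfrac{d+3}{2}$, as claimed.

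The main obstacle is the last paragraph: one must invoke the precise Eliahou--Kervaire decomposition (in particular, that the monomial ring attached to a generator $x^u$ is $\mathbb Q[x_{m(x^u)},\dots,x_n]$, so that the set $\sigma$ has size $n-m(x^u)+1$) and then carry out the short comparison of the leading two coefficients of the two expressions for $\dim_{\mathbb Q}M_t$. The remaining steps — the criterion \ref{B1} computation for $\inn_0(I)$, the Hilbert-function identities, and the passage to strong stability via Lemma~\ref{l:Borelfixed} — follow directly from results already established in the paper.
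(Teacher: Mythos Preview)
Your proof is correct and follows the same overall strategy as the paper: establish $\inn_0(I)=\langle x_1^d,x_2^ex_3^e\rangle$ via Buchberger's criterion~\ref{B1}, invoke Lemma~\ref{l:Borelfixed} to make every standard initial ideal Borel-fixed, and then read off a lower bound on the number of generators from the Eliahou--Kervaire/Stanley decomposition of that Borel-fixed ideal. The difference is in the final counting step. The paper works in the range $d\le t<2d$, where $\dim_{\mathbb Q}I_t=2\binom{t-d+2}{2}$, and uses the crude inequality $\binom{t-|u_i|+3-m(u_i)}{3-m(u_i)}\le\binom{t-d+1}{1}$ for the generators with $m(u_i)\ge 2$; evaluating at $t=2d-1$ gives $s\ge\tfrac12(d+3)$. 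You instead pass to $t\gg 0$ and match the coefficients of $t^2$ and $t$ in the Hilbert polynomial, which pins down \emph{exactly} one generator with $m=1$ (namely $x_1^d$) and exactly $d$ generators with $m=2$; this yields the sharper bound $s\ge d+1$. Your approach thus extracts more information from the same decomposition at no extra cost.

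One small point: the paper explicitly reindexes the variables so that $x_{i_1}\succ x_{i_2}\succ x_{i_3}$, because the Borel-fixedness coming from Lemma~\ref{l:Borelfixed} (via generic initial ideals) is with respect to the variable ordering determined by $\prec$, not necessarily the natural one. Your Eliahou--Kervaire formula uses $m(x^u)=\max\{i:u_i>0\}$ with the natural ordering; you should either say ``without loss of generality $x_1\succ x_2\succ x_3$'' or carry the permutation $i_1,i_2,i_3$ through the decomposition as the paper does. This does not affect the count, only the labeling.
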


\begin{proof}
Note first that the existence of $f,g$ satisfying these conditions
follows from Lemma~\ref{l:Borelfixed}, from which it also follows that
every standard initial ideal $\inn_{\prec}(I)$ is Borel-fixed.  That
$\{ f,g \}$ is a $2$-adic Gr\"obner basis for $I$ with respect to $w=0$ follows from
Buchberger's criterion~\ref{B1}.

Fix a term order $\prec$, and let $\inn_{\prec}(I) = \langle
x^{u_1},\dots,x^{u_s} \rangle$.  Write $\{1,2,3\}=\{i_1,i_2,i_3\}$ so that $x_{i_1} \succ x_{i_2} \succ x_{i_3}$.
For $u \in \mathbb N^3$, denote by $m(u)$ the index $m(u) = \max(j :
u_{i_j} \neq 0 ) \in \{1,2,3\}$.  Since $\inn_{\prec}(I)$ is
Borel-fixed, the decomposition $\{ (x^{u_i}, \{i_{m(u_i)},\dots,i_3 \}) : 1
\leq i \leq s \}$ is a Stanley decomposition for $\inn_{\prec}(I)$.
This means that $\dim_{\mathbb Q}(\inn_{\prec}(I)_t) = \sum_{i=1}^s {t - |u_i|
  + 3-m(u_i) \choose 3-m(u_i) }$.  Without loss of generality we may
assume that $x^{u_1} =x_{i_1}^d$, and $m(u_i) \geq 2$ for $i \geq 2$.
Since $I$ is generated in degree $d$, $|u_i| \geq d$ for all $i$.
Since the Hilbert function of $I$ and any initial ideal (standard or
$2$-adic) agree, the fact that the $2$-adic initial ideal of $I$ is $\langle x_1^d,
x_2^{e}x_3^{e} \rangle$ implies that $\dim_{\mathbb Q}(I_t) = 2 {t-d +2 \choose
  2}$ for $d \leq t < 2d$.  Thus for $d \leq t < 2d$ we have
\begin{align*}
2{t-d+2 \choose 2} & = \sum_{i=1}^s {t - |u_i|  + 3-m(u_i) \choose 3-m(u_i) } \\
& \leq {t -d+2 \choose 2} +(s-1){ t - d + 1 \choose 1} \\
\end{align*}
so 
$$1/2(t-d+2)(t-d+1) \leq (s-1)(t-d+1).$$
This means that $s \geq 1/2(d+3)$, as required.
\end{proof}

\begin{bibdiv}
\begin{biblist}

\bib{BPR}{article}{
  author={Baker, Matthew},
   author={Payne, Sam},
   author={Rabinoff, Joseph},
   title={Nonarchimedean geometry, tropicalization, and metrics on curves},
   journal={Algebr. Geom.},
   volume={3},
   date={2016},
   number={1},
   pages={63--105},
   issn={2214-2584},
}

\bib{Buchberger}{article}{
   author={Buchberger, Bruno},
   title={A criterion for detecting unnecessary reductions in the
   construction of Gr\"obner-bases},
   conference={
      title={Symbolic and algebraic computation (EUROSAM '79, Internat.
      Sympos., Marseille, 1979)},
   },
   book={
      series={Lecture Notes in Comput. Sci.},
      volume={72},
      publisher={Springer},
      place={Berlin},
   },
   date={1979},
   pages={3--21},

}

\bib{MustafinVarieties}{article}{
   author={Cartwright, Dustin},
   author={H{\"a}bich, Mathias},
   author={Sturmfels, Bernd},
   author={Werner, Annette},
   title={Mustafin varieties},
   journal={Selecta Math. (N.S.)},
   volume={17},
   date={2011},
   number={4},
   pages={757--793},
   issn={1022-1824},
}

\bib{GrobnerValuations}{misc}{
author={Andrew J. Chan},
title={GrobnerValuations},
note={A Macaulay 2 package available at \url{http://homepages.warwick.ac.uk/staff/D.Maclagan/papers/GrobnerValuations.m2}},
year={2013}
}
 
\bib{CLO}{book}{
   author={Cox, David},
   author={Little, John},
   author={O'Shea, Donal},
   title={Ideals, varieties, and algorithms},
   series={Undergraduate Texts in Mathematics},
   edition={3},
   note={An introduction to computational algebraic geometry and commutative
   algebra},
   publisher={Springer},
   place={New York},
   date={2007},
   pages={xvi+551},
   isbn={978-0-387-35650-1},
   isbn={0-387-35650-9},
}

\bib{CLO2}{book}{
   author={Cox, David A.},
   author={Little, John},
   author={O'Shea, Donal},
   title={Using algebraic geometry},
   series={Graduate Texts in Mathematics},
   volume={185},
   edition={2},
   publisher={Springer},
   place={New York},
   date={2005},
   pages={xii+572},
   isbn={0-387-20706-6},
}
\bib{Dube}{article}{
   author={Dub{\'e}, Thomas W.},
   title={The structure of polynomial ideals and Gr\"obner bases},
   journal={SIAM J. Comput.},
   volume={19},
   date={1990},
   number={4},
   pages={750--775},
   issn={0097-5397},
}

\bib{Eisenbud}{book}{
   author={Eisenbud, David},
   title={Commutative algebra},
   series={Graduate Texts in Mathematics},
   volume={150},
   note={With a view toward algebraic geometry},
   publisher={Springer-Verlag},
   place={New York},
   date={1995},
   pages={xvi+785},
   isbn={0-387-94268-8},
   isbn={0-387-94269-6},
}

\bib{Garling}{book}{
author={Garling, D. J. H.},
   title={Inequalities: a journey into linear analysis},
   publisher={Cambridge University Press},
   place={Cambridge},
   date={2007},
   pages={x+335},
   isbn={978-0-521-69973-0},
}

\bib{M2}{misc}{
          author = {Grayson, Daniel R.},
	  author = {Stillman, Michael E.},
          title = {Macaulay2, a software system for research
                   in algebraic geometry},
          address = {Available at \url{http://www.math.uiuc.edu/Macaulay2/}}
        }

\bib{singularbook}{book}{
   author={Greuel, Gert-Martin},
   author={Pfister, Gerhard},
   title={A {S}ingular introduction to commutative algebra},
   edition={Second, extended edition},
   note={With contributions by Olaf Bachmann, Christoph Lossen and Hans
   Sch\"onemann;
   With 1 CD-ROM (Windows, Macintosh and UNIX)},
   publisher={Springer},
   place={Berlin},
   date={2008},
   pages={xx+689},
   isbn={978-3-540-73541-0},

}

\bib{gfan}{misc}{
     author = {Jensen, Anders N.},
     title = {{G}fan, a software system for {G}r{\"o}bner fans and tropical varieties},
     address = {Available at \url{http://home.imf.au.dk/jensen/software/gfan/gfan.html}}
}

\bib{TropicalBook}{book}{
  author={Maclagan, Diane},
   author={Sturmfels, Bernd},
   title={Introduction to tropical geometry},
   series={Graduate Studies in Mathematics},
   volume={161},
   publisher={American Mathematical Society, Providence, RI},
   date={2015},
   pages={xii+363},
   isbn={978-0-8218-5198-2},
}

\bib{MarkwigRen}{unpublished}{
label={MR15},
author={Thomas Markwig},
author={Yue Ren},
title={Gr\"obner fans of $x$-homogeneous ideals in $R[\![t]\!][x]$},
year={2015},
note={arXiv:1512.02662}
}

\bib{MoraMoller}{article}{
   author={M{\"o}ller, H. Michael},
   author={Mora, Ferdinando},
   title={Upper and lower bounds for the degree of Groebner bases},
   conference={
      title={EUROSAM 84},
      address={Cambridge},
      date={1984},
   },
   book={
      series={Lecture Notes in Comput. Sci.},
      volume={174},
      publisher={Springer},
      place={Berlin},
   },
   date={1984},
   pages={172--183},

}

\bib{SpeyerThesis}{thesis}{
label={Spe05},
author = {David E Speyer},
title = {Tropical Geometry},
type={phd},
year = {2005},
place={UC Berkeley},
}

\bib{Weispfenning}{article}{
   author={Weispfenning, Volker},
   title={Comprehensive Gr\"obner bases},
   journal={J. Symbolic Comput.},
   volume={14},
   date={1992},
   number={1},
   pages={1--29},
   issn={0747-7171},
}

\end{biblist}
\end{bibdiv}

\end{document}